\date{}
\theoremstyle{plain}
\newtheorem{theorem}{Theorem}[section]
\newtheorem{proposition}{Proposition}[section]
\newtheorem{lemma}{Lemma}[section]
\newtheorem{corollary}{Corollary}[section]
\theoremstyle{definition}
\newtheorem{definition}{Definition}[section]
\newtheorem{remark}{Remark}[section]
\numberwithin{equation}{section}
\numberwithin{theorem}{section}
\numberwithin{proposition}{section}
\numberwithin{lemma}{section}
\numberwithin{corollary}{section}
\numberwithin{definition}{section}
\newcommand{\R}{\mathbb{R}}
\newcommand{\N}{\mathbb{N}}
\newcommand{\Z}{\mathbb{Z}}
\newcommand{\D}{\mathcal{D}}
\newcommand{\esslim}{\operatornamewithlimits{ess\,lim}}
\newcommand{\essinf}{\operatornamewithlimits{ess\,inf}}
\newcommand{\esssup}{\operatornamewithlimits{ess\,sup}}
\newcommand{\sign}{\operatorname{sign}}
\newcommand{\<}{\langle}
\renewcommand{\>}{\rangle}
\newcommand{\const}{\mathrm{const}}
\renewcommand{\div}{\operatorname{div}}
\newcommand{\supp}{\operatorname{supp}}
\title{To the theory of entropy sub-solutions of degenerate non-linear parabolic equations}
\author{Evgeny Yu. Panov\footnote{Novgorod State University, 41, B.St-Petersburgskaia str., 173003 Veliky Novgorod, Russia}}
\begin{document}
\maketitle
\begin{abstract}
We prove existence of the largest entropy sub-solution and the smallest entropy super-solution to the Cauchy problem for a nonlinear degenerate parabolic equation with only continuous flux and diffusion functions. Applying this result, we establish the uniqueness of entropy solution with periodic initial data. The more general comparison principle is also proved in the case when at least one of the initial functions is periodic.

\medskip
Keywords: nonlinear parabolic equations, conservation laws, Cauchy problem, entropy sub- and super-solutions, comparison principle
\end{abstract}

\maketitle

\section{Introduction}\label{sec1}
In the half space $\Pi=\R_+\times\R^n$, $\R_+=(0,+\infty)$, we consider a nonlinear parabolic equation
\begin{equation}\label{1}
u_t+\div_x\varphi(u)-\Delta_x g(u)=0,
\end{equation}
where the flux vector $\varphi(u)=(\varphi_1(u),\ldots,\varphi_n(u))$ and the diffusion function $g(u)$ are merely continuous: $\varphi_i(u)\in C(\R)$, $i=1,\ldots,n$, $g(u)\in C(\R)$ and $g(u)$ is nonstrictly increasing. Since $g(u)$ may be constant on nontrivial intervals, (\ref{1}) is a degenerate
(hyperbolic-parabolic) equation. In particular case $g\equiv\const$ it reduces to a first order conservation law
 \begin{equation}\label{con}
u_t+\div_x \varphi(u)=0.
\end{equation}
Equation (\ref{1}) is endowed with the initial condition
\begin{equation}\label{2}
u(0,x)=u_0(x).
\end{equation}
We recall the notion of entropy solution (as well as entropy sub- and super-solution) in the sense of Carrillo \cite{Car}. We denote $v^+=\max(v,0)$. Let $H(v)=\sign^+ v=\left\{\begin{array}{lr} 1, & v>0, \\ 0, & v\le 0 \end{array}\right.$ be the Heaviside function.

\begin{definition}\label{def1} A function $u=u(t,x)\in L^\infty(\Pi)$ is called an entropy sub-solution (e.sub-s. for short) to problem (\ref{1}), (\ref{2}) if the generalized gradient $\nabla_x g(u)\in L^2_{loc}(\Pi,\R^n)$, for each $k\in\R$
\begin{equation}\label{esub}
((u-k)^+)_t+\div_x[H(u-k)(\varphi(u)-\varphi(k))]-\Delta_x((g(u)-g(k))^+)\le 0
\end{equation}
in the sense of distributions on $\Pi$ (in $\D'(\Pi)$), and
\begin{equation}\label{isub}
\esslim_{t\to 0+} (u(t,x)-u_0(x))^+=0 \ \mbox{ in } L^1_{loc}(\R^n);
\end{equation}
A function $u=u(t,x)\in L^\infty(\Pi)$ is called an entropy super-solution (e.super-s.) to problem (\ref{1}), (\ref{2}) if $\nabla_x g(u)\in L^2_{loc}(\Pi,\R^n)$, for each $k\in\R$
\begin{equation}\label{esuper}
((k-u)^+)_t+\div_x[H(k-u)(\varphi(k)-\varphi(u))]-\Delta_x((g(k)-g(u))^+)\le 0 \ \mbox{ in } \D'(\Pi),
\end{equation}
and
\begin{equation}\label{isuper}
\esslim_{t\to 0+} (u_0(x)-u(t,x))^+=0 \ \mbox{ in } L^1_{loc}(\R^n);
\end{equation}
Finally, a function $u=u(t,x)\in L^\infty(\Pi)$ is called an entropy solution (an e.s.) to problem (\ref{1}), (\ref{2})
if this function is an e.sub-s. and an e.super-s. of this problem simultaneously.
\end{definition}
Entropy condition (\ref{esub}) means that for each test function $f=f(t,x)\in C_0^\infty(\Pi)$, $f\ge 0$,
\begin{align}\label{esubi}
\int_\Pi H(u-k)\{(u-k)f_t+[\varphi(u)-\varphi(k)-\nabla_x g(u)]\cdot\nabla_x f\}dtdx=\nonumber\\
\int_\Pi \{(u-k)^+f_t+H(u-k)(\varphi(u)-\varphi(k))\cdot\nabla_xf+(g(u)-g(k))^+\Delta_x f\}dtdx\ge 0
\end{align}
(here and in the sequel we denote by $\cdot$ the scalar multiplication of finite-dimensional vectors). Similarly we understand entropy relation (\ref{esuper}).

In the case of conservation laws (\ref{con}) the notion of e.s. of (\ref{con}), (\ref{2}) coincides with the known notion of generalized entropy solution in the sense of Kruzhkov \cite{Kr}. It is known that e.s. of (\ref{1}), (\ref{2}) always exists but in the multidimensional case $n>1$ may be nonunique. For conservation laws (\ref{con}) the corresponding examples were constructed in \cite{KrPa1,KrPa2}. In the case $\varphi(u)\in C^1(\R)$ the uniqueness is well-known. Some sufficient conditions for uniqueness of e.s., which extends results of \cite{KrPa2}, are contained in \cite{AndMal}.

\begin{remark}\label{rem1}

(i) As directly follows from the definition, a function $u=u(t,x)$ is an e.super-s. of (\ref{1}), (\ref{2}) if and only if the function $-u$ is an e.sub-s. to the problem
\begin{equation}\label{red}
u_t-\div_x\varphi(-u)-\Delta (-g(-u))=0, \quad u(0,x)=-u_0(x).
\end{equation}

(ii) Taking in (\ref{esub}) $k=-\|u\|_\infty$, we obtain that an e.sub-s. $u=u(t,x)$ satisfies the relation
\begin{equation}\label{wsub}
u_t+\div_x\varphi(u)-\Delta_x g(u)\le 0 \ \mbox{ in } \D'(\Pi).
\end{equation}
Similarly, taking in (\ref{esuper}) $k=\|u\|_\infty$, we arrive at
\begin{equation}\label{wsuper}
u_t+\div_x\varphi(u)-\Delta_x g(u)\ge 0 \ \mbox{ in } \D'(\Pi).
\end{equation}
As follows from (\ref{wsub}) and (\ref{wsuper}), any e.s. satisfies equation (\ref{1}) in $\D'(\Pi)$, i.e., it is a weak solution of this equation.

It is natural to call a function $u=u(t,x)\in L^\infty(\Pi)$ a \textit{weak sub-solution} (weak sub-s.), respectively -- a \textit{weak super-solution} (weak super-s.) of problem (\ref{1}), (\ref{2}) if $\nabla_x g(u)\in L^2_{loc}(\Pi,\R^n)$, and $u$ satisfies (\ref{wsub}), (\ref{isub}), respectively -- (\ref{wsuper}), (\ref{isuper}).

(iii) Since for every function $p(u)$ and $k\in\R$
$$
p(\max(u,k))=p(k)+H(u-k)(p(u)-p(k)), \quad p(\min(u,k))=p(k)-H(k-u)(p(k)-p(u)),
$$
we can rewrite entropy relations (\ref{esub}), (\ref{esuper}) in the equivalent forms
\begin{align}\label{esub1}
(\max(u,k))_t+\div_x\varphi(\max(u,k))-\Delta_x g(\max(u,k))\le 0 \ \mbox{ in } \D'(\Pi), \\
\label{esuper1}
(\min(u,k))_t+\div_x\varphi(\min(u,k))-\Delta_x g(\min(u,k))\ge 0 \ \mbox{ in } \D'(\Pi),
\end{align}
respectively.
\end{remark}

The main results of the paper are contained in the following two theorems.

\begin{theorem}\label{th2}
There exist the unique largest e.sub-s. $u_+(t,x)$ and the smallest e.super-s. $u_-(t,x)$ of the problem (\ref{1}), (\ref{2}). Besides, $u_-(t,x)\le u_+(t,x)$.
\end{theorem}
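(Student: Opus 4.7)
My strategy is a Perron-type construction. I set $u_+$ equal to the essential supremum of the class $\mathcal S$ of all e.sub-s.\ of (\ref{1}), (\ref{2}), and construct $u_-$ dually as the essential infimum of all e.super-s. Two preliminaries reduce to a bounded setting. First, both classes are nonempty, since an e.s.\ of (\ref{1}), (\ref{2}) exists by known results. Second, a maximum principle: taking $k=\esssup u_0$ in (\ref{esub}) and exploiting the initial trace (\ref{isub}) against a test function concentrated near $t=0$ yields $u\le\esssup u_0$ a.e.\ for every $u\in\mathcal S$; symmetrically $v\ge\essinf u_0$ a.e.\ for every e.super-s.\ $v$. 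In particular, $u_+$ is a priori bounded above by $\esssup u_0$ and $u_-$ below by $\essinf u_0$.

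The heart of the proof is the lattice property: if $u_1,u_2\in\mathcal S$, then $\max(u_1,u_2)\in\mathcal S$. I would establish it by Kruzhkov's doubling of variables as developed by Carrillo \cite{Car} for degenerate parabolic equations. Writing (\ref{esubi}) for $u_1(t,x)$ with constant $k$ replaced by $u_2(s,y)$, symmetrically for $u_2(s,y)$, adding the two inequalities, testing against a mollified diagonal $f(t,x)\rho_\varepsilon(t-s,x-y)$ and letting $\varepsilon\to 0$---the indefinite cross term $H(u_1-u_2)\nabla_x g(u_1)\cdot\nabla_x g(u_2)$ being handled via Carrillo's chain-rule identity on $\{g(u_1)=g(u_2)\}$---one obtains the Kato-type inequality
\begin{equation*}
(u_1-u_2)^+_t+\div_x\bigl[H(u_1-u_2)(\varphi(u_1)-\varphi(u_2))\bigr]-\Delta_x(g(u_1)-g(u_2))^+\le 0
\end{equation*}
in $\D'(\Pi)$. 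Combining this with the equivalent entropy form (\ref{esub1}) for $u_2$ and the identity $\max(u_1,u_2)=u_2+(u_1-u_2)^+$, one verifies that $\max(u_1,u_2)$ satisfies (\ref{esub}) for every $k\in\R$; its initial trace is immediate from that of $u_1,u_2$.

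Granted the lattice property, $u_+$ is constructed as follows. Separability of $L^1_{loc}(\Pi)$ together with the $L^\infty$ bound furnishes a countable subfamily $\{v_k\}\subset\mathcal S$ whose essential supremum equals $\esssup_{u\in\mathcal S} u$ a.e. Set $u_n:=\max(v_1,\dots,v_n)\in\mathcal S$; this sequence is monotone nondecreasing and uniformly bounded (above by $\esssup u_0$, below by $v_1$), hence $u_n\nearrow u_+$ a.e.\ and in $L^p_{loc}(\Pi)$ for every $p<\infty$. Continuity of $\varphi,g$ and dominated convergence pass every nonlinear term of (\ref{esubi}) to the limit, so $u_+$ satisfies (\ref{esub}) for every $k$. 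A uniform local energy estimate---derived from (\ref{esub}) by testing against a suitable nonnegative compactly supported function in the Carrillo manner---provides an $L^2_{loc}$ bound for $\nabla_x g(u_n)$; its weak limit coincides with $\nabla_x g(u_+)$ because $g(u_n)\to g(u_+)$ strongly, giving the required regularity. The initial trace for $u_+$ follows by dominated convergence.

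The smallest e.super-s.\ $u_-$ is obtained symmetrically, or directly via Remark \ref{rem1}(i) applied to the transformed problem (\ref{red}). Finally, $u_-\le u_+$ is not a sub/super comparison principle (which would give the reverse inequality $u_+\le u_-$, equivalent to uniqueness and generally unavailable here) but a sandwich estimate: any e.s.\ $u^*$ of (\ref{1}), (\ref{2}) is simultaneously an e.sub-s.\ and an e.super-s., whence $u_-\le u^*\le u_+$ a.e. The principal obstacle I anticipate is the rigorous execution of Carrillo's doubling for merely continuous $\varphi,g$---in particular the cancellation of the degenerate parabolic cross term---and the accompanying uniform $L^2_{loc}$ estimate on $\nabla_x g(u_n)$ needed to secure the regularity of the limit $u_+$.
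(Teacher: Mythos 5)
Your overall architecture coincides with the paper's: a priori $L^\infty$ bounds from a maximum principle, the lattice property $\max(u_1,u_2)$ being again an e.sub-s.\ via Carrillo-type doubling of variables, a countable monotone exhaustion converging to $u_+$, passage to the limit using a uniform $L^2_{loc}$ bound on $\nabla_x g$, the dual construction of $u_-$ through Remark~\ref{rem1}(i), and the sandwich $u_-\le u^*\le u_+$ through an existing e.s. The one genuinely different ingredient is your selection of the countable subfamily by the measure-theoretic essential supremum; the paper instead maximizes the functional $I(u)=\int_\Pi u\,e^{-t-|x|}dtdx$ over the class of e.sub-s., takes $v_r=\max(u_1,\dots,u_r)$ along a maximizing sequence, and deduces maximality afterwards from $I(\max(u_+,u))\le I$. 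Both devices are legitimate and buy the same thing.

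Two steps, however, are under-justified as written and would fail if executed literally. First, the maximum principle: taking $k=\esssup u_0=b$ in (\ref{esub}) gives a distributional inequality for $(u-b)^+$ with vanishing initial trace, but to conclude $(u-b)^+\equiv 0$ you must absorb the flux term $\div_x[H(u-b)(\varphi(u)-\varphi(b))]$, and with $\varphi$ merely continuous there is no bound of the form $|\varphi(u)-\varphi(b)|\le L\,(u-b)^+$, hence no finite speed of propagation and no Gronwall or Kruzhkov-cone argument; a test function concentrated near $t=0$ does not suffice. The paper's Proposition~\ref{pro2} is precisely the nontrivial repair: it integrates (\ref{eint}) against the measure $m(m-1)H(M-k)((k-b)^+)^{m-2}dk$ to produce the entropy $\eta(u)=((u-b)^+)^m$ with $m>n$, whose flux is controlled by $\eta(u)^{1-1/m}$, and then runs exponential weights travelling with an $\varepsilon$-dependent speed $N(\varepsilon)=C\varepsilon^{-1/m}$. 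Second, in the lattice property, adding your Kato inequality for $(u_1-u_2)^+$ to (\ref{esub1}) for $u_2$ at level $k$ does not yield the entropy inequality for $\max(u_1,u_2)$ at level $k$, because $\max(u_1,u_2,k)\ne\max(u_2,k)+(u_1-u_2)^+$ (take $u_1=0$, $u_2=-1$, $k=1$). The identity $\max(u_1,u_2)=u_2+(u_1-u_2)^+$ only delivers the $k$-free inequality (\ref{wsub}) for $\max(u_1,u_2)$; to recover all $k$ one must first observe, via Remark~\ref{rem1}(iii), that $\max(u_1,k)$ and $\max(u_2,k)$ are themselves e.sub-s.\ and apply the $k$-free conclusion to that truncated pair, which is exactly how the paper's Theorem~\ref{th1} concludes. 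Neither repair changes your plan, but both are genuinely needed.
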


On the base of this result we establish the following \textit{comparison principle}.

\begin{theorem}\label{th4}
Let functions $u=u(t,x)$, $v=v(t,x)$ be an e.sub-s. and an e.super-s. of (\ref{1}), (\ref{2}) with corresponding initial data $u_0(x)$, $v_0(x)$, and $u_0(x)\le v_0(x)$. If at least one of the initial functions is periodic then $u(t,x)\le v(t,x)$ a.e. in $\Pi$.
\end{theorem}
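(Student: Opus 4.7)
The plan is to reduce $u\le v$ to a comparison between the extremal sub- and super-solutions associated to the periodic datum, and then use periodicity to eliminate the spatial terms that obstruct a global $L^1$ estimate when $\varphi$ is only continuous.

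By Remark \ref{rem1}(i), the substitution $(u,v,u_0,v_0)\mapsto(-v,-u,-v_0,-u_0)$ turns a sub/super-pair for (\ref{1}), (\ref{2}) into a sub/super-pair for the dual problem (\ref{red}) and preserves periodicity of the data, so we may assume $u_0$ is periodic. Let $u_+$, $u_-$ be the largest e.sub-s.\ and smallest e.super-s.\ of (\ref{1}), (\ref{2}) with data $u_0$ furnished by Theorem \ref{th2}; since $u$ is an e.sub-s.\ with data $u_0$, we have $u\le u_+$. Because $u_0\le v_0$ yields the pointwise bound $(u_0-v(t,\cdot))^+\le(v_0-v(t,\cdot))^+$ and the distributional inequality (\ref{esuper}) is independent of the initial datum, $v$ is also an e.super-s.\ of the same Cauchy problem, so $v\ge u_-$. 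It therefore suffices to prove $u_+\le u_-$.

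I would first note that $u_\pm$ inherit the periods of $u_0$: translation invariance shows that for any period $\tau$ of $u_0$ the function $u_+(\cdot,\cdot+\tau)$ is an e.sub-s.\ of the same problem, and maximality forces it to coincide with $u_+$; the argument for $u_-$ is symmetric. Applying Carrillo's doubling of variables to (\ref{esub}) for $u_+$ and (\ref{esuper}) for $u_-$, together with $\nabla_x g(u_\pm)\in L^2_{loc}$, yields the Kato-type inequality
\begin{equation*}
((u_+-u_-)^+)_t+\div_x[H(u_+-u_-)(\varphi(u_+)-\varphi(u_-))]-\Delta_x((g(u_+)-g(u_-))^+)\le 0
\end{equation*}
in $\D'(\Pi)$. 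Test this against $\psi(t)\chi_N(x)$, with $\psi\in C_c^\infty(0,T)$, $\psi\ge 0$, and $\chi_N\in C_c^\infty(\R^n)$ a smooth cutoff of $[-N,N]^n$. Because $u_+-u_-$ is periodic, $\int(u_+-u_-)^+\chi_N\,dx\sim N^n$, while the terms containing $\nabla\chi_N$ and $\Delta\chi_N$ are supported in an annulus of bounded width and are only $O(N^{n-1})$ (since $\varphi$ and $g$ are bounded on the ranges of $u_\pm$). Dividing by $N^n$ and sending $N\to\infty$ makes $t\mapsto\int_{\T^n}(u_+-u_-)^+\,dx$ nonincreasing in $\D'(0,T)$; it vanishes at $t=0+$ by the pointwise bound $(u_+-u_-)^+\le(u_+-u_0)^++(u_0-u_-)^+$ combined with (\ref{isub}), (\ref{isuper}). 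Hence $u_+\le u_-$, and consequently $u\le u_+\le u_-\le v$ a.e.\ in $\Pi$.

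The principal technical obstacle is the Kato inequality above: its derivation requires the full Carrillo machinery (doubling of variables adapted to the degenerate diffusion, the chain rule $\nabla_x((g(u)-g(k))^+)=H(u-k)\nabla_x g(u)$ a.e., and a careful treatment of the ``hyperbolic'' region $\{g'=0\}$), and it is delicate precisely because $\varphi$ is only continuous, so no cone of finite propagation is available to discard the $\nabla\chi_N$ and $\Delta\chi_N$ contributions -- that role is played entirely by the periodicity of $u_\pm$.
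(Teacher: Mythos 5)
Your reduction steps are all sound: passing to the case of a periodic $u_0$ via Remark~\ref{rem1}(i), observing that $v$ is an e.super-s.\ for the datum $u_0$ (so $v\ge u_-$) while $u\le u_+$, the periodicity of $u_\pm$, and the cutoff-plus-periodicity argument that would turn a Kato inequality into monotonicity of $t\mapsto\int_P(u_+-u_-)^+\,dx$ over a period cell $P$. The gap sits exactly where you flag it: the inequality
\begin{equation*}
((u_+-u_-)^+)_t+\div_x[H(u_+-u_-)(\varphi(u_+)-\varphi(u_-))]-\Delta_x((g(u_+)-g(u_-))^+)\le 0 \ \mbox{ in } \D'(\Pi)
\end{equation*}
for a sub-/super-solution pair is asserted, not derived, and it cannot simply be quoted from \cite{Car}: it is the single hardest ingredient of that theory, and adapting it to the present setting (merely continuous $\varphi$ and $g$, a sub-solution paired with a super-solution) would require redoing the doubling of variables with the decomposition over the sets where $g(u_\pm)\in S_g$ --- i.e.\ the analogue of Lemma~\ref{lem2} and of the whole computation in Theorem~\ref{th1}, carried out for the pair $(u_+,u_-)$ instead of two sub-solutions. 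So as written the central inequality of your proof is an unproved claim, and supplying it is a substantial piece of work.

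The paper sidesteps this machinery entirely, and the device is worth internalizing. First, $u_-\le u_+$ comes for free: an entropy solution exists, is simultaneously an e.sub-s.\ and an e.super-s., and is therefore sandwiched, $u_-\le u_+$ (this is already part of Theorem~\ref{th2}). Once the sign of $u_+-u_-$ is known one no longer needs the positive part, hence no entropy pairing at all: subtracting the plain weak inequalities (\ref{wsub}) for $u_+$ and (\ref{wsuper}) for $u_-$ gives the linear relation $(u_+-u_-)_t+\div_x(\varphi(u_+)-\varphi(u_-))-\Delta_x(g(u_+)-g(u_-))\le 0$, and the same rescaled-cutoff argument you propose then yields $\frac{d}{dt}\int_P(u_+-u_-)\,dx\le 0$, whence $u_+=u_-$ (Theorem~\ref{th3}). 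The conclusion $u\le u_+=u_-\le v$ follows as in your first paragraph; the paper obtains $v\ge u_-$ slightly differently, via the lattice property applied to $\min(u_-,v)$, but your direct observation that $v$ is an e.super-s.\ for the datum $u_0$ is equally valid. To repair your proof, either carry out the doubling argument for the sub/super pair in full, or replace the Kato inequality by this sandwich-plus-linear-subtraction argument.
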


It is clear that this comparison principle implies uniqueness of e.s. of the problem  (\ref{1}), (\ref{2}) with periodic initial data.

\section{Preliminaries}
It is useful to formulate the notion of e.sub-s. of (\ref{1}), (\ref{2}) in the form of a single integral inequality.

\begin{proposition}\label{pro1}
A function $u=u(t,x)\in L^\infty(\Pi)$ such that $\nabla_xg(u)\in L^2_{loc}(\Pi,\R^n)$ is an e.s. of (\ref{1}), (\ref{2})
if and only if for every $k\in\R$ and each nonnegative test function $f=f(t,x)\in C_0^\infty(\bar\Pi)$, where $\bar\Pi=[0,+\infty)\times\R^n$,
\begin{equation}\label{eint}
\int_{\Pi}H(u-k)[(u-k)f_t+(\varphi(u)-\varphi(k))\cdot\nabla_x f+(g(u)-g(k))\Delta_x f]dtdx+\int_{\R^n} (u_0(x)-k)^+f(0,x)dx\ge 0.
\end{equation}
\end{proposition}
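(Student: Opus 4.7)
The equivalence is established via a time-cutoff argument that interchanges the initial datum with a boundary contribution in the entropy integral inequality.

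\textbf{Forward direction (e.sub-s.\ $\Rightarrow$ \eqref{eint}).} Given a nonnegative $f\in C_0^\infty(\bar\Pi)$ and $k\in\R$, I would multiply $f$ by a smooth time cutoff $\beta_\epsilon(t)$ vanishing on $[0,\epsilon/2]$ and equal to $1$ on $[\epsilon,+\infty)$, so that $\beta_\epsilon f\in C_0^\infty(\Pi)$ is admissible in the interior entropy inequality \eqref{esubi}. Expanding $(\beta_\epsilon f)_t=\beta_\epsilon' f+\beta_\epsilon f_t$, the $\beta_\epsilon$-multiplied block converges by dominated convergence to the interior integral in \eqref{eint}. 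The remaining piece $\int_\Pi(u-k)^+\beta_\epsilon'(t)f(t,x)\,dtdx$ is controlled by the pointwise bound $(u-k)^+\le(u-u_0)^++(u_0-k)^+$: since $\int_0^\infty\beta_\epsilon'\,dt=1$ with mass concentrated in $(\epsilon/2,\epsilon)$, the $(u-u_0)^+$-part vanishes by \eqref{isub}, and continuity of $f$ in $t$ makes the $(u_0-k)^+$-part approach $\int(u_0-k)^+f(0,x)\,dx$, yielding \eqref{eint}.

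\textbf{Reverse direction (\eqref{eint} $\Rightarrow$ e.sub-s.).} Restricting the test functions in \eqref{eint} to $f\in C_0^\infty(\Pi)$ kills the boundary term and returns \eqref{esubi}, hence \eqref{esub}. For \eqref{isub}, I would insert $f(t,x)=\alpha(t)\psi(x)$ with $\psi\in C_0^\infty(\R^n)$, $\psi\ge 0$, and $\alpha$ a smooth approximation of $\mathbf{1}_{[0,t_0]}$ (decreasing from $1$ to $0$ on $[t_0,t_0+\epsilon]$) into \eqref{eint}. The $\nabla_x f,\Delta_x f$ contribution is bounded by $C(t_0+\epsilon)$ (using $u\in L^\infty$ and continuity of $\varphi,g$); at a Lebesgue point $t_0$ of $F(t):=\int(u(t,x)-k)^+\psi(x)\,dx$, the limit $\epsilon\to 0$ produces $F(t_0)\le Ct_0+\int(u_0-k)^+\psi\,dx$, and letting $t_0\to 0+$ through Lebesgue points gives
\begin{equation*}
\esslimsup_{t\to 0+}\int_{\R^n}(u(t,x)-k)^+\psi(x)\,dx\le\int_{\R^n}(u_0(x)-k)^+\psi(x)\,dx
\end{equation*}
for every $k\in\R$ and every admissible $\psi\ge 0$. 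A routine approximation of $\mathbf{1}_K$ from above by smooth cutoffs extends the estimate to $\psi=\mathbf{1}_K$ for any compact $K\subset\R^n$.

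\textbf{Main obstacle.} Extracting the strong $L^1_{loc}$ convergence \eqref{isub} from this one-parameter family of scalar inequalities is the delicate step. The idea is to approximate $u_0$ uniformly by a piecewise-constant function $u_0^N=\sum_i c_i^N\mathbf{1}_{A_i^N}$ with $\|u_0-u_0^N\|_\infty\le 1/N$, apply the displayed estimate on each $K\cap A_i^N$ with $k=c_i^N$, and combine using $(u-u_0)^+\le(u-u_0^N)^++1/N$. Summing over the finitely many pieces yields $\esslimsup_{t\to 0+}\int_K(u-u_0)^+\,dx\le 2|K|/N$; letting $N\to\infty$ gives \eqref{isub} and completes the proof.
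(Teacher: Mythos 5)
Your proposal is correct and follows essentially the same route as the paper: a time cutoff near $t=0$ together with the bound $(u-k)^+\le(u-u_0)^++(u_0-k)^+$ for the forward direction (the paper first localizes at a Lebesgue point $t_0$ and then lets $t_0\to 0$, while you collapse these into a single limit), and for the converse the same decreasing cutoff $f=\alpha(t)\psi(x)$ yielding the one-sided trace estimate $\esslimsup_{t\to0+}\int(u(t,\cdot)-k)^+\psi\,dx\le\int(u_0-k)^+\psi\,dx$, upgraded to \eqref{isub} by the identical piecewise-constant approximation of $u_0$. The only cosmetic difference is that the paper extends the trace estimate directly to all nonnegative weights in $L^1(\R^n)$ (which is what one actually needs for the indicators $\chi_{A_i}$ of the merely measurable level pieces), rather than only to indicators of compact sets; this is the same routine approximation you invoke.
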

\begin{proof}
Let $E$ be a set of $t>0$ such that $(t,x)$ is a Lebesgue point of $u(t,x)$ for almost all $x\in\R^n$. It is rather well-known (see for example \cite[Lemma~1.2]{PaJHDE}) that $E$ is a set of full measure and $t\in E$ is a common Lebesgue point of the functions $t\to\int_{\R^n} u(t,x)b(x)dx$ for all $b(x)\in L^1(\R^n)$. Since every Lebesgue point of $u$ is also a Lebsesgue point of $p(u)$ for an arbitrary function $p\in C(\R)$, we may replace $u$ in the above property by $p(u)$, and in particular by $(u-k)^+$, $k\in\R$. We choose a function $\omega(s)\in C_0^\infty(\R)$, such that $\omega(s)\ge 0$, $\supp\omega\subset [0,1]$, $\int\omega(s)ds=1$, and define the sequences $\omega_r(s)=r\omega(rs)$, $\theta_r(s)=\int_{-\infty}^s\omega_r(\sigma)d\sigma=\int_{-\infty}^{rs}\omega(\sigma)d\sigma$, $r\in\N$. Obviously, the sequence $\omega_r(s)$ converges as $r\to\infty$ to the Dirac $\delta$-measure weakly in $\D'(\R)$ while the sequence $\theta_r(s)$ converges to the Heaviside function $H(s)$ pointwise and in $L^1_{loc}(\R)$. Notice that $0\le\theta_r(s)\le 1$.
We take $f=f(t,x)\in C_0^\infty(\bar\Pi)$, $f\ge 0$, and $t_0\in E$. Applying (\ref{esub}) to the nonnegative test function $\theta_r(t-t_0)f(t,x)\in C_0^\infty(\Pi)$, we arrive at the relation
\begin{align}\label{i1}
\int_{\Pi} (u-k)^+\omega_r(t-t_0)fdtdx+ \nonumber\\
\int_\Pi H(u-k)[(u-k)f_t+(\varphi(u)-\varphi(k))\cdot\nabla_x f+(g(u)-g(k))\Delta_xf]\theta_r(t-t_0)dtdx\ge 0.
\end{align}

Since
$$
\int_{\Pi} (u-k)^+\omega_r(t-t_0)fdtdx=\int_0^{+\infty}\left(\int_{\R^n} (u(t,x)-k)^+f(t,x)dx\right)\omega_r(t-t_0)dt
$$
and
$t_0$ is a Lebesgue point of the function $t\to\int_{\R^n} (u(t,x)-k)^+f(t,x)dx$, it follows from (\ref{i1}) in the limit as $r\to\infty$ that
\begin{align}\label{i2}
\int_{\R^n} (u(t_0,x)-k)^+f(t_0,x)dx+ \nonumber\\
\int_{(t_0,+\infty)\times\R^n} H(u-k)[(u-k)f_t+(\varphi(u)-\varphi(k))\cdot\nabla_x f+(g(u)-g(k))\Delta_xf]\theta_r(t-t_0)dtdx\ge 0.
\end{align}
Now we pass in (\ref{i2}) to the limit as $E\ni t_0\to 0$.
Since
$$(u(t,x)-k)^+\le (u_0(x)-k)^++(u(t,x)-u_0(x))^+,$$
we obtain that
\begin{align*}
\limsup_{E\ni t_0\to 0}\int_{\R^n}(u(t_0,x)-k)^+f(t_0,x)dx\le\int_{\R^n}(u_0(x)-k)^+f(0,x)dx+\\
\lim_{E\ni t_0\to 0}\int_{\R^n}(u(t_0,x)-u_0(x))^+f(t_0,x)dx=\int_{\R^n}(u_0(x)-k)^+f(0,x)dx,
\end{align*}
where we take into account initial condition (\ref{isub}).
With the help of this relation, the desired inequality (\ref{eint}) follows from (\ref{i2}) in the limit as $E\ni t_0\to 0$.

\medskip
Conversely, assume that relation (\ref{eint}) holds. Taking in this relation a nonnegative test function $f\in C_0^\infty(\Pi)$, we obtain that
$$
\int_\Pi H(u-k)[(u-k)f_t+(\varphi(u)-\varphi(k))\cdot\nabla_x f+(g(u)-g(k))\Delta_xf]dtdx\ge 0.
$$
This means that
$$
((u-k)^+)_t+\div_x[H(u-k)(\varphi(u)-\varphi(k))]-\Delta_x((g(u)-g(k))^+)\le 0 \ \mbox{ in } \D'(\Pi)
$$
and the entropy requirement (\ref{esub}) is satisfied. It only remains to prove initial requirement (\ref{isub}) from Definition~\ref{def1}.
We fix a nonnegative function $h(x)\in C_0^\infty(\R^n)$, and apply (\ref{eint}) to the test function $f=h(x)(1-\theta_r(t-t_0))$, where $t_0\in E$.
As a result, we obtain
\begin{align*}\label{i3}
\int_{\R^n} (u_0(x)-k)^+h(x)dx-\int_{\Pi} (u(t,x)-k)^+\omega_r(t-t_0)hdtdx + \\
\int_{(0,t_0+1/r)\times\R^n} H(u-k)[(\varphi(u)-\varphi(k))\cdot\nabla h+(g(u)-g(k))\Delta h](1-\theta_r(t-t_0))dtdx\ge 0.
\end{align*}
Passing in this relation to the limit as $r\to\infty$, we arrive at the relation
\begin{align*}
\int_{\R^n} (u_0(x)-k)^+h(x)dx-\int_{\R^n} (u(t_0,x)-k)^+h(x)dx + \\
\int_{(0,t_0)\times\R^n} H(u-k)[(\varphi(u)-\varphi(k))\cdot\nabla h+(g(u)-g(k))\Delta h]dtdx\ge 0,
\end{align*}
which implies in the limit as $E\ni t_0\to 0$ that
\begin{equation}\label{i3}
\limsup_{E\ni t_0\to 0} \int_{\R^n}(u(t_0,x)-k)^+h(x)dx\le \int_{\R^n} (u_0(x)-k)^+h(x)dx.
\end{equation}
Obviously, (\ref{i3}) remains valid for all nonnegative $h(x)\in L^1(\R^n)$. We fix $\varepsilon>0$. Since $u_0(x)\in L^\infty(\R^n)$, we can find a step function
$v(x)=\sum_{i=1}^m v_i\chi_{A_i}(x)$, where $v_i\in\R$, $\chi_{A_i}(x)$ are indicator functions of measurable sets $A_i\subset\R^n$, such that $\|u_0-v\|_\infty<\varepsilon$.
The sets $A_i$, $i=1,\ldots,m,$ are supposed to be disjoint. In view of (\ref{i3})
\begin{align}\label{i4}
\limsup_{E\ni t_0\to 0} \int_{\R^n}(u(t_0,x)-v(x))^+h(x)dx=\limsup_{E\ni t_0\to 0} \sum_{i=1}^m \int_{\R^n}(u(t_0,x)-v_i)^+\chi_{A_i}(x)h(x)dx \le \nonumber\\
\sum_{i=1}^m \int_{\R^n}(u_0(x)-v_i)^+\chi_{A_i}(x)h(x)dx= \int_{\R^n} (u_0(x)-v(x))^+h(x)dx\le\varepsilon\|h\|_1.
\end{align}
Since $$(u(t_0,x)-u_0(x))^+\le (u(t_0,x)-v(x))^++(v(x)-u_0(x))^+<(u(t_0,x)-v(x))^++\varepsilon,$$
it follows from (\ref{i4}) that
$$
\limsup_{E\ni t_0\to 0} \int_{\R^n}(u(t_0,x)-u_0(x))^+h(x)dx\le 2\varepsilon\|h\|_1
$$
an in view of arbitrariness of $\varepsilon>0$, we conclude that
$$
\lim_{E\ni t_0\to 0} \int_{\R^n}(u(t_0,x)-u_0(x))^+h(x)dx=0
$$
for all $h(x)\in L^1(\R^n)$. Obviously, this implies that
$$
\esslim_{t\to 0+} (u(t,x)-u_0(x))^+=0 \ \mbox{ in } L^1_{loc}(\R^n)
$$
and completes the proof.
\end{proof}

We will need some a priory estimate of entropy sub-solutions (\textit{maximum principle}).

\begin{proposition}\label{pro2}
If $u=u(t,x)$ is a weak sub-s. of (\ref{1}), (\ref{2}) then $u(t,x)\le b=\esssup u_0(x)$ a.e. in $\Pi$.
\end{proposition}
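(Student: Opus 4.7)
The plan is to parallel the strategy from the proof of Proposition~\ref{pro1}: derive an integral representation of the weak sub-solution inequality that incorporates the initial datum, then specialize it to the level $k=b$.

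First I would apply the distributional inequality $(\ref{wsub})$ to the test function $\theta_r(t-t_0)f(t,x)$ for $f\in C_0^\infty(\bar\Pi)$, $f\ge 0$, pass $r\to\infty$ at a Lebesgue point $t_0\in E$, and then send $t_0\to 0+$. The one-sided initial condition $(\ref{isub})$ gives only
$\limsup_{t_0\to 0+}\int_{\R^n}u(t_0,x)f(t_0,x)\,dx\le\int_{\R^n} u_0(x)f(0,x)\,dx$
(via $u(t_0,x)\le u_0(x)+(u(t_0,x)-u_0(x))^+$ and $f\ge 0$), which fortunately has precisely the sign needed. Subtracting the trivial identity $\int_\Pi k f_t\,dtdx+k\int_{\R^n}f(0,x)\,dx=0$ for any constant $k$ (the $\varphi(k),g(k)$ contributions vanish by Gauss), I obtain, for every $k\in\R$,
\begin{equation*}
\int_\Pi [(u-k)f_t+(\varphi(u)-\varphi(k))\cdot\nabla f+(g(u)-g(k))\Delta f]\,dtdx + \int_{\R^n}(u_0-k)f(0,x)\,dx\ge 0.
\end{equation*}

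Setting $k=b$, the initial contribution is nonpositive since $u_0\le b$ a.e. The most delicate step is then to upgrade this integrated weak inequality at the single level $k=b$ to the entropy-type form
\begin{equation*}
((u-b)^+)_t+\div_x[H(u-b)(\varphi(u)-\varphi(b))]-\Delta_x(g(u)-g(b))^+\le 0 \text{ in } \mathcal{D}'(\Pi).
\end{equation*}
For the diffusion term this rests on the Carrillo-type chain rule, justified by $\nabla_x g(u)\in L^2_{loc}$: it yields $\nabla_x(g(u)-g(b))^+=H(u-b)\nabla_x g(u)$ and hence the Kato-type inequality $H(u-b)\Delta_x g(u)\le\Delta_x(g(u)-g(b))^+$ in distributions. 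For the hyperbolic term one uses that $\varphi(u)-\varphi(b)\to 0$ as $u\to b$ by mere continuity of $\varphi$, so that differentiating $H(u-b)(\varphi(u)-\varphi(b))$ produces no spurious concentration on $\{u=b\}$.

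Once this entropy inequality at $k=b$ is in hand, one tests it against a nonnegative spatial cutoff $h(x)\in C_0^\infty(\R^n)$ and a time cutoff $1-\theta_r(t-T)$, and passes $r\to\infty$ to obtain
\begin{equation*}
\int_{\R^n}(u(T,x)-b)^+h(x)\,dx\le\int_0^T\!\!\int_{\R^n}\!\bigl[H(u-b)(\varphi(u)-\varphi(b))\cdot\nabla h+(g(u)-g(b))^+\Delta h\bigr]\,dtdx,
\end{equation*}
the initial term vanishing thanks to the one-sided bound $\esslim_{t\to 0+}(u(t,x)-b)^+=0$ in $L^1_{loc}$ (immediate from $(\ref{isub})$ and $u_0\le b$). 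Choosing cutoffs $h=h_R$ with $h_R\to 1$ locally and combining with standard estimates then forces $(u-b)^+=0$ a.e. in $\Pi$. The main obstacle will be the entropy-upgrade step at $k=b$: although one expects it to go through thanks to the specific choice of level, it requires careful justification given the degeneracy of $g$ and the mere continuity of $\varphi$.
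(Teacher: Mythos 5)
Your first step (the integrated weak inequality including the initial term) is fine and mirrors the first half of the proof of Proposition~\ref{pro1}; the two later steps, however, each contain a genuine gap. The ``entropy upgrade'' at the level $k=b$ is not available for a weak sub-solution. The weak inequality (\ref{wsub}) controls only the single distribution $u_t+\div_x\varphi(u)-\Delta_x g(u)$; to pass to $((u-b)^+)_t+\div_x[H(u-b)(\varphi(u)-\varphi(b))]-\Delta_x(g(u)-g(b))^+\le 0$ you must in effect multiply by $H(u-b)$, and for merely continuous $\varphi$ and an $L^\infty$ function $u$ with no BV or chain-rule structure on the convective term this is exactly the extra information that distinguishes an e.sub-s.\ from a weak sub-s. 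The paper's route to such an upgrade is Lemma~\ref{lem2}, which does yield the entropy inequality at level $k$ for a weak sub-s., but only under the hypothesis $g(k)\in S_g$; if $g$ is constant near $b$ (in particular for a pure conservation law, where $S_g=\emptyset$) it gives nothing at $k=b$. Indeed, for the Burgers equation with $u_0\equiv 0$ there is a piecewise-constant weak solution containing a non-entropic fan that attains the value $1>0=\esssup u_0$, so the conclusion cannot be extracted from (\ref{wsub}), (\ref{isub}) alone; the paper's proof accordingly starts from the characterization (\ref{eint}), i.e.\ from the full family of entropy inequalities.

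Second, even granting the entropy inequality at $k=b$, the final localization $h=h_R\to 1$ does not close. Since $\varphi$ and $g$ are only continuous there is no finite speed of propagation and no bound of the form $|\varphi(u)-\varphi(b)|\le L\,(u-b)^+$, while $(u-b)^+$ is merely bounded, not integrable over $\R^n$; with $|\nabla h_R|\sim R^{-1}$ supported on an annulus of volume $\sim R^n$ the flux term is $O(R^{n-1})$, and a Gronwall-type argument with a mere modulus of continuity does not terminate. This is precisely the difficulty the paper's proof is built to overcome: it integrates (\ref{eint}) against the measure $m(m-1)H(M-k)((k-b)^+)^{m-2}dk$ with $m>n$, producing the entropy $\eta(u)=((u-b)^+)^m$ whose flux and diffusion terms satisfy $|\psi(u)|,\,h(u)\le C(\eta(u)+\varepsilon)\varepsilon^{-1/m}$, and then tests with the exponential weight $\rho(N(t-t_0)+|x|-1)$ travelling at the $\varepsilon$-dependent speed $N\sim\varepsilon^{-1/m}$, so that the resulting error $\varepsilon(Nt_0+2)^n$ tends to $0$ as $\varepsilon\to 0$ precisely because $m>n$. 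Some quantitative device of this kind is indispensable; ``standard estimates'' will not do here.
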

\begin{proof}
We denote $M=\|u\|_\infty$. For $m>n$ we
integrate (\ref{eint}) over the nonnegative finite measure ${m(m-1)H(M-k)((k-b)^+)^{m-2}dk}$. As a result, we obtain that
for every nonnegative function $f=f(t,x)\in C_0^\infty(\bar\Pi)$
\begin{equation}\label{p1}
\int_\Pi[\eta(u)f_t+\psi(u)\cdot\nabla_x f+h(u)\Delta_x f]dtdx+\int_{\R^n}\eta(u_0(x))f(0,x)dx\ge 0,
\end{equation}
where for $|u|\le M$
\begin{align*}
\eta(u)=m(m-1)\int_b^u (u-k)^+ ((k-b)^+)^{m-2}dk=((u-b)^+)^m, \\
\psi(u)=m(m-1)\int_b^u H(u-k)(\varphi(u)-\varphi(k))^+((k-b)^+)^{m-2}dk\in C(\R,\R^n), \\
h(u)=m(m-1)\int_b^u (g(u)-g(k))^+((k-b)^+)^{m-2}dk\in C(\R).
\end{align*}
We notice that for $|u|\le M$
$$
|\psi(u)|\le 2m(m-1)\max_{|u|\le M}|\varphi(u)|\int_b^u((k-b)^+)^{m-2}dk=2m\max_{|u|\le M}|\varphi(u)|((u-b)^+)^{m-1} \\
$$
(here and in the sequel we denote by $|v|$ the Euclidean norm of a finite-dimensional vector $v$), and analogously
$$
0\le h(u)\le 2m\max_{|u|\le M}|g(u)|((u-b)^+)^{m-1}.
$$
These estimates imply that for each $\varepsilon>0$
$$
\frac{|\psi(u)|}{\eta(u)+\varepsilon}\le \frac{C_1 m}{(u-b)^++\varepsilon((u-b)^+)^{1-m}}, \quad \frac{h(u)}{\eta(u)+\varepsilon}\le \frac{C_2m}{(u-b)^++\varepsilon((u-b)^+)^{1-m}},
$$
where $\displaystyle C_1=2\max_{|u|\le M}|\varphi(u)|$, $\displaystyle C_2=2\max_{|u|\le M}|g(u)|$.
By direct computations we find
$$
\min_{s>0}(s+\varepsilon s^{1-m})=m(m-1)^{\frac{1}{m}-1}\varepsilon^{\frac{1}{m}}.
$$
Therefore,
\begin{equation}\label{p2}
\frac{|\psi(u)|}{\eta(u)+\varepsilon}\le C\varepsilon^{-\frac{1}{m}}, \quad \frac{h(u)}{\eta(u)+\varepsilon}\le C\varepsilon^{-\frac{1}{m}},
\end{equation}
where $C=\max(C_1,C_2)(m-1)^{1-\frac{1}{m}}=\const$.
Remark that $u_0\le b$ a.e. in $\R^n$, consequently $\eta(u_0)=0$ a.e. on $\R^n$, and that $\int_\Pi f_tdtdx=-\int_{\R^n}f(0,x)dx$.
Then it follows from (\ref{p1}) that
\begin{equation}\label{p3}
\int_\Pi[(\eta(u)+\varepsilon)f_t+\psi(u)\cdot\nabla_x f+h(u)\Delta_x f]dtdx+\varepsilon \int_{\R^n}f(0,x)dx\ge 0.
\end{equation}
We choose a nonstrictly decreasing function $\rho(r)\in C^\infty(\R)$ with the properties $\rho(r)=1$ for $r\le 0$, $\rho(r)=e^{-r}$ for $r\ge 1$, it is concave on $(-\infty,1/2]$ and is convex on $[1/2,+\infty)$ (so that $1/2$ is an inflection point of $\rho(r)$). Such a function satisfies the inequality
\begin{equation}\label{ro}
\rho''(r)\le c|\rho'(r)|=-c\rho'(r)
\end{equation}
for some positive constant $c$. In fact, $\rho''(r)\le 0\le |\rho'(r)|$ for $r<1/2$, $\rho''(r)=-\rho'(r)=-e^{-r}$ for $r>1$
while on the segment $[1/2,1]$ we have $-\rho'(r)\ge -\rho'(1)=e^{-1}$ by the convexity of $\rho(r)$, and therefore
$\rho''(r)\le -c\rho'(r)$ where $c=e\max\limits_{1/2\le r\le 1}\rho''(r)\ge 1$. We conclude that (\ref{ro}) holds.
Now we take the test function in the form
$$
f(t,x)=\rho(N(t-t_0)+|x|-1)\theta_r(t_0-t),
$$
where $0<t_0<T$, the constant $N=N(\varepsilon)$ will be indicated later, and the sequence $\theta_r(s)$, $r\in\N$, was defined in the proof of Proposition~\ref{pro1}.
Observe that $f=\theta_r(t_0-t)$ in a neghborhood $|x|<1$ of the set where $x=0$, which implies that $f(t,x)\in C^\infty(\bar\Pi)$.
Since the function $f$ and all its derivatives are exponentially vanishes as $|x|\to\infty$, we may choose the function $f$ as a test function in (\ref{p3}). Observe that
\begin{align}\label{p4a}
f_t(t,x)=N\rho'(N(t-t_0)+|x|-1)\theta_r(t_0-t)-\rho(N(t-t_0)+|x|-1)\omega_r(t_0-t), \\
\label{p4b}
\nabla_x f=\rho'(N(t-t_0)+|x|-1)\theta_r(t_0-t)\frac{x}{|x|}, \\
\label{p5}
\Delta_x f=\left(\rho''(N(t-t_0)+|x|-1)+\rho'(N(t-t_0)+|x|-1)\frac{n-1}{|x|}\right)\theta_r(t_0-t)\le\nonumber\\ -c\rho'(N(t-t_0)+|x|-1)\theta_r(t_0-t)
\end{align}
in view of (\ref{ro}).
It now follows from (\ref{p3}) with the help of (\ref{p4a}), (\ref{p4b} and (\ref{p5}) that for sufficiently large $r\in\N$
\begin{align}\label{p6}
-\int_\Pi[(\eta(u)+\varepsilon)\omega_r(t_0-t)\rho(N(t-t_0)+|x|-1)dtdx+\varepsilon \int_{\R^n}\rho(|x|-Nt_0-1)dx+\nonumber\\
\int_\Pi[N(\eta(u)+\varepsilon)-|\psi(u)|-c h(u)]\rho'(N(t-t_0)+|x|-1)\theta_r(t_0-t)dtdx\ge 0.
\end{align}
Taking in (\ref{p6}) $N=C(1+c)\varepsilon^{-\frac{1}{m}}$, we find that $N(\eta(u)+\varepsilon)-|\psi(u)|-c h(u)\ge 0$
in view of (\ref{p2}). Since $\rho'(r)\le 0$, the last integral in (\ref{p6}) is nonpositive and (\ref{p6}) implies that
$$
\int_\Pi[(\eta(u)+\varepsilon)\omega_r(t_0-t)\rho(N(t-t_0)+|x|-1)dtdx\le \varepsilon \int_{\R^n}\rho(|x|-Nt_0-1)dx.
$$
We assume that $t_0\in E$, where $E\subset\R_+$ is a set of full measure defined in the proof of Proposition~\ref{pro1}.
Then passing to the limit as $r\to\infty$ in the above inequality, we arrive at the relation
\begin{equation}\label{p7}
\int_{\R^n}\eta(u(t_0,x))\rho(|x|-1)dx\le \varepsilon \int_{\R^n}\rho(|x|-Nt_0-1)dx.
\end{equation}
Observe that
\begin{align}\label{p8}
\int_{\R^n}\rho(|x|-Nt_0-1)dx\le \int_{|x|\le Nt_0+2}dx+e^{Nt_0+1}\int_{|x|>Nt_0+2} e^{-|x|}dx\le\nonumber\\
c_n(Nt_0+2)^n+nc_n e^{Nt_0+1}\int_{Nt_0+2}^{+\infty} e^{-r}r^{n-1}dr,
\end{align}
where $c_n$ is the measure of a unit ball in $\R^n$. Since
\begin{align*}
\int_{Nt_0+2}^{+\infty} e^{-r}r^{n-1}dr=\int_0^{+\infty}e^{-s-Nt_0-2}(s+Nt_0+2)^{n-1}ds\le \\ (Nt_0+2)^{n-1}e^{-Nt_0-2}\int_0^{+\infty}e^{-s}(1+s)^{n-1}ds=a(Nt_0+2)^{n-1}e^{-Nt_0-2},
\end{align*}
$a=\const$, it follows from (\ref{p8}) that for some constants $a_1,a_2$
$$
\varepsilon\int_{\R^n}\rho(|x|-N(\varepsilon)t_0-1)dx\le a_1\varepsilon(N(\varepsilon)t_0+2)^n\le a_2\varepsilon(1+\varepsilon^{-\frac{1}{m}})^n\mathop{\to}_{\varepsilon\to 0+}0
$$
(recall that $m>n$). Therefore, passing to the limit in (\ref{p7}) as $\varepsilon\to 0+$, we obtain that for all $t_0\in E$
$$
\int_{\R^n}((u(t_0,x)-b)^+)^m\rho(|x|-1)dx=\int_{\R^n}\eta(u(t_0,x))\rho(|x|-1)dx=0.
$$
Since $\rho(|x|-1)>0$, we conclude that $u(t,x)\le b$ for a.e. $(t,x)\in\Pi$. The proof is complete.
\end{proof}

By Remark~\ref{rem1}(i), we see that any e.super.s $u=u(t,x)$ satisfies the \textit{minimum principle}:
$u(t,x)\ge\essinf u_0(x)$ a.e. in $\Pi$.

\begin{lemma}\label{lem1}
Suppose that $u=u(t,x)\in L^\infty(\Pi)$ is a weak sub-s. of (\ref{1}), (\ref{2}).
Assume also that $\eta(u)\in C^1(\R)$, $\eta'(u)=p(g(u))$, where $p(v)$ is a Lipschitz continuous non-negative nonstrictly increasing function.
Then for each test function $f=f(t,x)\in C_0^\infty(\R)$, $f\ge 0$,
$$
\<\eta(u)_t,f\>=-\int_{\Pi}\eta(u)f_tdtdx\le\int_{\Pi}(\varphi(u)-\nabla_x g(u))\cdot\nabla_x(p(g(u))f)dtdx.
$$
\end{lemma}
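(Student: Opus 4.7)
My plan is to multiply the weak sub-solution inequality by the non-negative multiplier $p(g(u))$ and to push the resulting formal chain-rule identities
\begin{align*}
p(g(u))u_t &=\eta(u)_t,\\
p(g(u))\operatorname{div}_x\varphi(u) &=\operatorname{div}_x[p(g(u))\varphi(u)]-p'(g(u))\nabla_x g(u)\cdot\varphi(u),\\
p(g(u))\Delta_x g(u) &=\operatorname{div}_x[p(g(u))\nabla_x g(u)]-p'(g(u))|\nabla_x g(u)|^2
\end{align*}
through a mollification argument. Testing the resulting distributional inequality against $f$ and regrouping produces exactly the claim. The Lipschitz hypothesis on $p$ is needed precisely so that $\nabla_x w=p'(g(u))\nabla_x g(u)\in L^2_{loc}(\Pi,\R^n)$ for $w:=p(g(u))\in L^\infty(\Pi)$, and the non-negativity of $p$ ensures $w\ge 0$, so that $wf$ is a valid (limit of) non-negative test function(s).

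Concretely, I would first use $\nabla_x g(u)\in L^2_{loc}$ to integrate the diffusion term by parts once and rewrite (\ref{wsub}) as
$$
\int_\Pi\bigl[u\,\phi_t+(\varphi(u)-\nabla_x g(u))\cdot\nabla_x\phi\bigr]\,dtdx\ge 0
$$
for every non-negative $\phi\in C_0^\infty(\Pi)$. Then let $\rho_\varepsilon$ be a standard non-negative mollifier on $\R^{n+1}$ and set $w_\varepsilon=w\ast\rho_\varepsilon\in C^\infty(\Pi)$; by construction $w_\varepsilon\ge 0$, $w_\varepsilon\to w$ in $L^q_{loc}$ for every $q<\infty$, and $\nabla_x w_\varepsilon=(\nabla_x w)\ast\rho_\varepsilon\to\nabla_x w$ in $L^2_{loc}$. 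Apply the reformulated inequality to the admissible test function $\phi=w_\varepsilon f\ge 0$ and split
$$
\int_\Pi u\,(w_\varepsilon f)_t\,dtdx=\int_\Pi u\,w_\varepsilon\,f_t\,dtdx+\int_\Pi u\,(w_\varepsilon)_t\,f\,dtdx.
$$
The spatial contribution and $\int uw_\varepsilon f_t$ pass to the limit routinely, converging to $\int(\varphi(u)-\nabla_x g(u))\cdot\nabla_x(p(g(u))f)$ and $\int up(g(u))f_t$ respectively.

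The hard part is the limit of the remaining term $\int u(w_\varepsilon)_t f$, which, when added to $\int uw_\varepsilon f_t$, must produce $\int\eta(u)f_t$ — i.e.\ the whole combination must realize the distributional chain rule $\eta(u)_t=p(g(u))u_t$. To justify this, I would use that by the weak sub-s.\ condition the distribution $u_t+\operatorname{div}_x\varphi(u)-\Delta_x g(u)$ is non-positive and hence, by Schwartz's theorem, equals $-\mu$ for some non-negative Radon measure $\mu$ on $\Pi$. Writing $\int u(w_\varepsilon f)_t=-\langle u_t,w_\varepsilon f\rangle$ and substituting $u_t=-\operatorname{div}_x\varphi(u)+\Delta_x g(u)-\mu$ reduces the problem to showing $\langle\mu,w_\varepsilon f\rangle\to\langle\mu,p(g(u))f\rangle$; this is the true technical obstacle, since the other pieces of $u_t$ are already controlled by the $L^2_{loc}$-convergence of $\nabla_x w_\varepsilon$. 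I would handle the $\mu$-convergence by bounded dominated convergence, using that $w_\varepsilon\to w$ a.e.\ together with the fine-regularity properties of $w$ coming from $\nabla_x w\in L^2_{loc}$ (so that Lebesgue points of $w$ exhaust a set of full $\mu$-measure on the support of $f$). Once this convergence is secured, passing to the limit in the inequality applied to $w_\varepsilon f$ yields
$$
\int_\Pi\eta(u)\,f_t\,dtdx+\int_\Pi(\varphi(u)-\nabla_x g(u))\cdot\nabla_x(p(g(u))f)\,dtdx\ge 0,
$$
which, up to sign, is the desired conclusion.
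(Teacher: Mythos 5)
There is a genuine gap at exactly the point you yourself flag as ``the hard part,'' and the resolution you sketch does not close it. After applying the weak inequality to $\phi=w_\varepsilon f$ (with $w=p(g(u))$), what the lemma requires is the one-sided chain rule
$$
\limsup_{\varepsilon\to 0}\int_\Pi u\,\partial_t(w_\varepsilon f)\,dtdx\ \le\ \int_\Pi \eta(u)\,f_t\,dtdx,
$$
and this is precisely where the convexity of $\eta$ (i.e.\ the monotonicity of $\eta'=p\circ g$) must enter; your proposal never invokes it. The substitution $u_t=-\div_x\varphi(u)+\Delta_x g(u)-\mu$ is circular for this purpose: it merely re-expresses $\int u\,\partial_t(w_\varepsilon f)$ as $-\int(\varphi(u)-\nabla_x g(u))\cdot\nabla_x(w_\varepsilon f)+\langle\mu,w_\varepsilon f\rangle$, i.e.\ it reproduces the inequality you started from (up to the term $\langle\mu,w_\varepsilon f\rangle\ge 0$) and yields no identification of the limit of the time term with $\int\eta(u)f_t$. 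Moreover the convergence $\langle\mu,w_\varepsilon f\rangle\to\langle\mu,p(g(u))f\rangle$ cannot be obtained as you indicate: $w_\varepsilon\to w$ only Lebesgue-a.e., while $\mu$ may be singular (e.g.\ carried by a time slice across which $u$ jumps), and $w$ has no regularity in $t$ at all --- only $\nabla_x w\in L^2_{loc}$ --- so no capacity or fine-regularity argument forces convergence $\mu$-a.e.; at such a jump $w_\varepsilon$ converges to an average of one-sided traces, which is the classical obstruction to the chain rule for measures. Note finally that if you could prove the displayed one-sided chain rule you would not need the $\mu$-term at all (simply drop $\langle\mu,w_\varepsilon f\rangle\ge 0$), so the step you concentrate on is both unjustified and beside the point, while the step carrying the content is missing.

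For comparison, the paper's proof avoids mollification commutators entirely. It starts from the pointwise convexity inequality $\eta(u(t+h,x))-\eta(u(t,x))\le p(g(u(t+h,x)))\,(u(t+h,x)-u(t,x))$, and controls the right-hand side by integrating the weak inequality (\ref{wsub}) over the time interval $(t,t+h)$ against the \emph{time-frozen} spatial test function $b(x)=p(g(u(t+h,x)))f(t+h,x)\in W^1_1(\R^n)$; Fubini and the limit $h\to 0$ (which only requires $\nabla_x w\in L^2_{loc}$) then give the claim. Freezing the time variable in the multiplier is what prevents the problematic term $\int u\,(\partial_t w_\varepsilon) f$ from ever appearing. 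If you wish to keep a mollification scheme, you must mollify in $t$ only and prove the commutator inequality by hand using convexity of $\eta$ --- which in effect reproduces the paper's discrete time-difference argument.
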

\begin{proof}
Since $\eta(u)$ is a convex function, then for each $(t,x)\in\Pi$ and $h>0$
$$
\eta(u(t+h,x))-\eta(u(t,x))\le\eta'(u(t+h,x))(u(t+h,x)-u(t,x))=p(g(u(t+h,x)))(u(t+h,x)-u(t,x)).
$$
Multiplying this inequality by $f(t+h,x)$ and integrating over $(t,x)\in\Pi$, we obtain that for $0<h<\min\{ \ t \ | \ (t,x)\in\supp f \ \}$
\begin{align}\label{l1}
\int_\Pi \eta(u(t,x))(f(t,x)-f(t+h,x))dtdx=\int_\Pi \eta((u(t+h,x))-\eta(u(t,x)))f(t+h,x)dtdx\le \nonumber\\
\int_\Pi p(g(u(t+h,x)))(u(t+h,x)-u(t,x))f(t+h,x)dtdx.
\end{align}
Applying (\ref{wsub}) to a test function $f=a(t)b(x)$ with $a(t)\in C_0^\infty(\R_+)$, $b(x)\in C_0^\infty(\R^n)$, $a(t),b(x)\ge 0$, we obtain that
$$
-\int_0^\infty I(t)a'(t)dt\le \int_\Pi [\varphi(u)-\nabla_x g(u)]\cdot\nabla_x b(x) a(t)dtdx,
$$
where we denote
$\displaystyle I(t)=\int_{\R^n} u(t,x)b(x)dx$. This means that in $\D'(\R_+)$
\begin{equation}\label{l2}
I'(t)\le \int_{\R^n} [\varphi(u(t,x))-\nabla_x g(u(t,x))]\cdot\nabla_x b(x)dx.
\end{equation}
Let $E$ be the set of full measure defined above in the proof of Proposition~\ref{pro1}, and let $t_1,t_2\in E$, $t_2>t_1$. Since $t_1,t_2$ are Lebesgue points of $I(t)$, it follows from (\ref{l2}) that
\begin{equation}\label{l3}
\int_{\R^n}(u(t_2,x)-u(t_1,x))b(x)dx=I(t_2)-I(t_1)\le\int_{(t_1,t_2)\times\R^n}[\varphi(u(t,x))-\nabla_x g(u(t,x))]\cdot\nabla_x b(x)dtdx.
\end{equation}
It is clear that this property remains valid for functions $b(x)$ from the Sobolev space $W_1^1(\R^n)$. In particular, we may take $b=p(g(u(t+h,x)))f(t+h,x)$ for almost all fixed $t$. Then for all such $t$ satisfying the additional requirement
$t,t+h\in E$, we have
\begin{align}\label{l4}
\int_{\R^n}(u(t+h,x)-u(t,x))p(g(u(t+h,x)))f(t+h,x)dx\le \nonumber\\ \int_{(t,t+h)\times\R^n}[\varphi(u(\tau,x))-\nabla_x g(u(\tau,x))]\cdot\nabla_x (p(g(u(t+h,x)))f(t+h,x))d\tau dx.
\end{align}
Putting (\ref{l4}) into (\ref{l1}), we arrive at
\begin{align}\label{l5}
\int_\Pi \eta(u(t,x))(f(t,x)-f(t+h,x))dtdx\le\nonumber\\ \int_{\Pi}\int_t^{t+h}[\varphi(u(\tau,x))-\nabla_x g(u(\tau,x))]\cdot\nabla_x (p(g(u(t+h,x)))f(t+h,x))d\tau dtdx=\nonumber\\ \int_{\Pi}\int_{\tau-h}^{\tau}[\varphi(u(\tau,x))-\nabla_x g(u(\tau,x))]\cdot\nabla_x (p(g(u(t+h,x)))f(t+h,x))dt d\tau dx=\nonumber\\
\int_{\Pi}[\varphi(u(\tau,x))-\nabla_x g(u(\tau,x))]\cdot\nabla_x q_h(\tau,x)d\tau dx,
\end{align}
where we use Fubini theorem and denote
$$
q_h(\tau,x)=\int_{\tau-h}^{\tau} p(g(u(t+h,x)))f(t+h,x)dt=\int_{\tau}^{\tau+h} p(g(u(t,x)))f(t,x)dt.
$$
Observe that
\begin{align}\label{l6}
\frac{1}{h}\nabla_x q_h(\tau,x)=\frac{1}{h}\int_{\tau}^{\tau+h} \nabla_x (p(g(u(t,x)))f(t,x))dt\mathop{\to}_{h\to 0}\nonumber\\
\nabla_x (p(g(u(\tau,x)))f(\tau,x))=p'(g(u(\tau,x)))\nabla_x g(u(\tau,x))f(\tau,x)+p(g(u(\tau,x)))\nabla_x f(\tau,x)
\end{align}
in $L^2_{loc}(\Pi)$ (here we choose the generalized derivative $p'(v)$ being a Borel function). Dividing (\ref{l5}) by $h$ and passing to the limit as $h\to 0$ with the help of (\ref{l6}), we obtain the desired relation
\begin{align*}
-\int_\Pi \eta(u(t,x))f_t(t,x)dtdx\le\int_{\Pi}[\varphi(u(\tau,x))-\nabla_x g(u(\tau,x))]\cdot\nabla_x  (p(g(u(\tau,x)))f(\tau,x))d\tau dx=\\ \int_{\Pi}[\varphi(u(t,x))-\nabla_x g(u(t,x))]\cdot\nabla_x  (p(g(u(t,x)))f(t,x))dtdx.
\end{align*}
\end{proof}

\begin{corollary}\label{cor0}
Let $u=u(t,x)$ be a weak sub-s. of problem (\ref{1}), (\ref{2}), $\|u\|_\infty\le M$. Then for each nonnegative function $\alpha(t)\in C_0^1(\R_+)$
\begin{equation}
\int_\Pi |\nabla_x g(u)|^2e^{-|x|}\alpha(t)dtdx\le C(\alpha,M),
\end{equation}
where $C(\alpha,M)$ is a constant depending only on $\alpha$ and $M$.
\end{corollary}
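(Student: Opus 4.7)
The strategy is to apply Lemma~\ref{lem1} with $p$ chosen so that $p'(g(u))\equiv 1$ almost everywhere in $\Pi$, which turns the lemma into a weighted energy estimate for $|\nabla_x g(u)|^2$. Pick $K>\sup_{|u|\le M}|g(u)|$ and set $p(v)=(v+K)^+$; this $p$ is Lipschitz, non-negative, and non-strictly increasing, and since $\|u\|_\infty\le M$ we have $p(g(u))=g(u)+K>0$ and $p'(g(u))=1$ a.e.\ in $\Pi$. The antiderivative $\eta(u)=\int_0^u p(g(s))\,ds$ lies in $C^1(\R)$, is convex, and satisfies $|\eta(u)|\le C(M)$ on $[-M,M]$, so Lemma~\ref{lem1} is applicable. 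As test function I would use (an approximation of) $f(t,x)=\rho(|x|-1)\alpha(t)$, where $\rho$ is the smooth weight constructed in the proof of Proposition~\ref{pro2}; the crucial features are that $\rho(|x|-1)\ge e^{-|x|}$, that $\rho(|x|-1)\in L^1(\R^n)$, and that $|\nabla_x\rho(|x|-1)|\le c\,\rho(|x|-1)$, so in particular $|\nabla_x f|^2/f\le c^2 f$ pointwise.

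Since $f$ is not compactly supported in $x$, I would first apply Lemma~\ref{lem1} to the truncation $f_R(t,x)=f(t,x)\zeta_R(x)$, with $\zeta_R\in C_0^\infty(\R^n)$ a standard cutoff equal to $1$ on $\{|x|\le R\}$ and supported in $\{|x|\le 2R\}$ (chosen so that $|\nabla\zeta_R|^2/\zeta_R$ is bounded, e.g.\ as the square of a smoother cutoff); $\alpha\in C_0^1$ is handled by a routine time-mollification. Expanding $\nabla_x(p(g(u))f_R)=\nabla_x g(u)\,f_R+p(g(u))\nabla_x f_R$ and rearranging Lemma~\ref{lem1} gives
\begin{align*}
\int_\Pi|\nabla_x g(u)|^2 f_R\,dt\,dx\le{}&\int_\Pi\eta(u)(f_R)_t\,dt\,dx+\int_\Pi\varphi(u)\cdot\nabla_x g(u)\,f_R\,dt\,dx\\
&+\int_\Pi[\varphi(u)-\nabla_x g(u)]\cdot p(g(u))\nabla_x f_R\,dt\,dx.
\end{align*}
Young's inequality $|ab|\le\tfrac14 a^2+b^2$ applied to the two right-hand integrals containing $\nabla_x g(u)$ absorbs $\tfrac12\int|\nabla_x g(u)|^2 f_R$ into the left-hand side. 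The surviving integrals, involving $|\eta(u)|$, $|\varphi(u)|^2$, and $p(g(u))^2$ integrated against $f_R$, $|(f_R)_t|$, $|\nabla_x f_R|$, or $|\nabla_x f_R|^2/f_R$, are all bounded uniformly in $R$ by $C(\alpha,M)$, using the pointwise bounds $|\eta(u)|,|\varphi(u)|,p(g(u))\le C(M)$ together with $\int\rho(|x|-1)\,dx<\infty$ and the gradient estimate for $\rho$.

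Finally, the cutoff-error contributions (from the support of $\nabla\zeta_R$) live on $\{R\le|x|\le 2R\}$, where $\rho(|x|-1)\le C\,e^{-R}$, and hence vanish as $R\to\infty$. Monotone convergence on the left then yields $\int_\Pi|\nabla_x g(u)|^2\rho(|x|-1)\alpha(t)\,dt\,dx\le C(\alpha,M)$, which implies the stated bound since $\rho(|x|-1)\ge e^{-|x|}$. \textbf{Main obstacle.} The decisive step is choosing $p$ so that $p'\circ g\equiv 1$ on the essential range of $u$ while $p$ remains globally non-negative; a secondary but slightly fiddly issue is designing the cutoff $\zeta_R$ so that $|\nabla\zeta_R|^2/\zeta_R$ stays controlled, which is what lets Young's inequality close uniformly in $R$.
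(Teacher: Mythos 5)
Your proposal is correct and follows essentially the same route as the paper: apply Lemma~\ref{lem1} with $p(v)$ a nonnegative truncation of an affine shift of $v$ chosen so that $p(g(u))=g(u)+\const\ge 0$ on the essential range of $u$ (the paper takes $p(v)=(v-g(-M))^+$, which plays exactly the role of your $(v+K)^+$), test against an exponentially decaying weight in $x$, and absorb the $\nabla_x g(u)$ terms by Young's inequality. The only difference is that you are more scrupulous about admissibility of the test function (the $\zeta_R$ cutoff and the smoothed weight $\rho(|x|-1)$), whereas the paper simply inserts $f=\alpha(t)e^{-|x|}$ directly.
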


\begin{proof}
Denote $a=-M$. We apply Lemma~\ref{lem1} with $p(v)=(v-g(a))^+$. Denoting
$\displaystyle\eta(u)=\int_a^u (g(s)-g(a))^+ds$, we obtain the relation
\begin{equation}\label{c1}
\int_{\Pi}\{\eta(u)f_t+(\varphi(u)-\nabla_x g(u))\cdot\nabla_x(p(g(u))f)\}dtdx\ge 0.
\end{equation}
Taking in (\ref{c1}) $f=\alpha(t)e^{-|x|}$ and using the identity $\nabla_x p(g(u))=\nabla_x(g(u)-g(a))=\nabla_x g(u)$, we derive that
\begin{align}\label{c2}
\int_\Pi |\nabla_x g(u)|^2fdtdx\le
\int_\Pi [\eta(u)f_t+\varphi(u)\cdot\nabla_x g(u) f+p(g(u))(\varphi(u)-\nabla_x g(u))\cdot\nabla_x f]dtdx\le\nonumber\\
\int_\Pi [\eta(u)|f_t|+p(g(u))|\varphi(u)|f+(p(g(u))+|\varphi(u)|)|\nabla_xg(u)|f]dtdx,
\end{align}
where we use that $\nabla_x f=-\frac{x}{|x|}f$ and therefore
$$
|(\varphi(u)-\nabla_x g(u))\cdot\nabla_x f|=|(\varphi(u)-\nabla_x g(u))\cdot x/|x||f\le |\varphi(u)-\nabla_x g(u)|f\le
(|\varphi(u)|+|\nabla_x g(u)|)f.
$$
It follows from (\ref{c2}) with the help of Young's inequality that
\begin{align*}
\int_\Pi |\nabla_x g(u)|^2fdtdx\le\int_\Pi [\eta(u)|\alpha'(t)|+p(g(u))|\varphi(u)|\alpha(t)]e^{-|x|}dtdx+ \\ \frac{1}{2}\int_\Pi |\nabla_x g(u)|^2fdtdx+\int_\Pi\frac{1}{2}(p(g(u))+|\varphi(u)|)^2fdtdx,
\end{align*}
which implies that
\begin{align*}
\int_\Pi |\nabla_x g(u)|^2fdtdx\le C(\alpha,M)\doteq \\ \max_{|u|\le M}[2(\eta(u)+p(g(u))|\varphi(u)|)+(p(g(u))+|\varphi(u)|)^2]\int_\Pi\max(\alpha(t),|\alpha'(t)|)e^{-|x|}dtdx,
\end{align*}
as was to be proved.
\end{proof}

Let $H_r(u)=\max(0,\min(1,(1+ru)/2))$, $r\in\N$, be the sequence of approximations of the Heaviside function $H(u)=\sign^+(u)$. Denote by $S=S_g$ the set of $v\in\R$ such that $g^{-1}(v)$ is a singleton.
The following lemma is analogous to \cite[Lemma 5]{Car}.

\begin{lemma}\label{lem2}
Assume that $u=u(t,x)\in L^\infty(\Pi)$ is a weak sub-s. of (\ref{1}), (\ref{2}). Then for each $k\in\R$ such that $g(k)\in S$, and for any test function $f=f(t,x)\in C_0^\infty(\Pi)$, $f\ge 0$,
\begin{align}\label{l7}
\int_\Pi H(u-k)[(u-k)f_t+(\varphi(u)-\varphi(k)-\nabla_x g(u))\cdot\nabla_x f]dtdx\ge \nonumber\\ \limsup_{r\to\infty}
\int_\Pi H_r'(g(u)-g(k))|\nabla_x g(u)|^2f dtdx\ge 0.
\end{align}
\end{lemma}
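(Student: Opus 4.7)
The plan is to apply Lemma~\ref{lem1} with $p(v) = H_r(v-g(k))$, which is Lipschitz, non-negative, and non-decreasing; the associated convex function is $\eta_r(u) = \int_k^u H_r(g(s)-g(k))\,ds$, satisfying $\eta_r'(u) = H_r(g(u)-g(k)) = p(g(u))$. Because $\nabla_x g(u) \in L^2_{loc}(\Pi,\R^n)$ by Corollary~\ref{cor0}, the Lipschitz composition $H_r(g(u)-g(k))$ lies in $H^1_{loc}(\Pi)$ and obeys the standard chain rule $\nabla_x H_r(g(u)-g(k)) = H_r'(g(u)-g(k))\,\nabla_x g(u)$.

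Expanding $\nabla_x\bigl(H_r(g(u)-g(k))\,f\bigr)$ in the conclusion of Lemma~\ref{lem1} and integrating by parts the ``constant-flux'' contribution,
\[
\int_\Pi H_r'(g(u)-g(k))\,\varphi(k)\cdot\nabla_x g(u)\,f\,dtdx = -\int_\Pi H_r(g(u)-g(k))\,\varphi(k)\cdot\nabla_x f\,dtdx,
\]
I would isolate $A_r := \int_\Pi H_r'(g(u)-g(k))|\nabla_x g(u)|^2 f\,dtdx$ on the left and arrive at
\[
A_r \le \int_\Pi \eta_r(u) f_t\,dtdx + \int_\Pi H_r(g(u)-g(k))\bigl[\varphi(u)-\varphi(k)-\nabla_x g(u)\bigr]\cdot\nabla_x f\,dtdx + R_r,
\]
where the residual is $R_r := \int_\Pi H_r'(g(u)-g(k))\bigl[\varphi(u)-\varphi(k)\bigr]\cdot\nabla_x g(u)\,f\,dtdx$.

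Passing to $\limsup_{r\to\infty}$, dominated convergence handles the first two terms: $\eta_r(u)\to (u-k)^+$ pointwise and is dominated by $|u-k|$; $H_r(g(u)-g(k))\to H(g(u)-g(k))$ a.e., and by $g(k)\in S$ together with the monotonicity of $g$ one has $g(u)>g(k) \Leftrightarrow u>k$, so $H(g(u)-g(k))=H(u-k)$. Combined with the identity $H(u-k)(u-k)=(u-k)^+$, the first two terms converge exactly to the right-hand side of the asserted inequality. The second bound $\limsup_r A_r \ge 0$ is immediate from $H_r'\ge 0$.

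The main obstacle is to show $\limsup_{r\to\infty} R_r \le 0$. Since $g(k)\in S$, $g$ is continuous, and $|u|\le M$, a compactness argument (any sequence $s_n$ with $|s_n|\le M$ and $|g(s_n)-g(k)|\to 0$ admits a subsequential limit $s^*$ with $g(s^*)=g(k)$, hence $s^*=k$) gives $\delta_r := \sup\{|s-k| : |s|\le M,\ |g(s)-g(k)|\le 1/r\}\to 0$. Uniform continuity of $\varphi$ on $[-M,M]$ then yields $\epsilon_r := \sup\{|\varphi(s)-\varphi(k)| : |s-k|\le\delta_r\}\to 0$, and
\[
|R_r| \le \epsilon_r \int_\Pi H_r'(g(u)-g(k))|\nabla_x g(u)|\,f\,dtdx \le \epsilon_r\sqrt{B_r\,A_r}
\]
by Cauchy--Schwarz, where $B_r:=\int_\Pi H_r'(g(u)-g(k))\,f\,dtdx$. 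Young's inequality then absorbs the $A_r^{1/2}$ factor into the left-hand side at the cost of an additive $\epsilon_r^2 B_r$. The delicate point is that $B_r$ may blow up when the push-forward of $f\,dtdx$ under $g(u)$ concentrates mass at $g(k)$; however, on $\{u=k\}$ one has simultaneously $\nabla_x g(u)=0$ a.e. and $\varphi(u)=\varphi(k)$, so $R_r$ receives no contribution from this set, and a refined analysis of the thin annulus $\{u\ne k,\,|g(u)-g(k)|<1/r\}$, whose $f$-measure vanishes as $r\to\infty$ by $g(k)\in S$, yields $\epsilon_r^2 B_r\to 0$ and closes the argument.
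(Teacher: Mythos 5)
Your setup is exactly the paper's: apply Lemma~\ref{lem1} with $p(v)=H_r(v-g(k))$, expand $\nabla_x(H_r(g(u)-g(k))f)$, integrate the $\varphi(k)$-term by parts, and pass to the limit in the ``good'' terms by dominated convergence (using $g(k)\in S$ to identify $H(g(u)-g(k))$ with $H(u-k)$ and to make the integrand vanish on $\{u=k\}$). Up to the inequality
\begin{equation*}
A_r \le \int_\Pi \eta_r(u)f_t\,dtdx+\int_\Pi H_r(g(u)-g(k))\bigl[\varphi(u)-\varphi(k)-\nabla_xg(u)\bigr]\cdot\nabla_xf\,dtdx+R_r
\end{equation*}
everything is correct. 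The gap is in the claim that $R_r\to 0$, which is the entire content of the lemma.

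The Cauchy--Schwarz/Young route does not close. Even after discarding the set $\{\nabla_xg(u)=0\}$ (which indeed contains $\{g(u)=g(k)\}$ up to a null set), the remaining piece of $B_r$ is $\tfrac r2\,\mu_f(\{0<|g(u)-g(k)|<1/r\})$, where $\mu_f=f\,dtdx$. That $\mu_f(\{0<|g(u)-g(k)|<1/r\})\to 0$ says nothing about $r\,\mu_f(\{0<|g(u)-g(k)|<1/r\})$: if the distribution of $g(u)$ under $\mu_f$ has a nonvanishing density near $g(k)$ this quantity is of order $1$, and it may even be unbounded. So $\epsilon_r^2B_r\to 0$ would require a quantitative rate $\epsilon_r=o(r^{-1/2})$ for the modulus of continuity of $\varphi\circ g^{-1}_0$ at $g(k)$, which is unavailable since $\varphi$ and $g$ are merely continuous; equivalently, $\int_\Pi H_r'(g(u)-g(k))|\nabla_xg(u)|f\,dtdx$ is, by the coarea formula, an average of perimeters of the level sets $\{g(u)>\lambda\}$ for $\lambda$ near $g(k)$, and these need not be bounded. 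The paper sidesteps this entirely: where $\nabla_xg(u)\ne0$ one has $g(u)\in S$ a.e., hence $u=g^{-1}_0(g(u))$, so the integrand of $R_r$ equals $f\,\div_xF_r(g(u))$ with $F_r(v)=\int_{g(k)}^vH_r'(s-g(k))(\varphi(g^{-1}_0(s))-\varphi(k))\,ds$; since $\|F_r\|_\infty\le\tfrac r2\int_{g(k)-1/r}^{g(k)+1/r}|\varphi(g^{-1}_0(s))-\varphi(k)|\,ds\to0$ by continuity of $\varphi\circ g^{-1}_0$ at $g(k)\in S$, integration by parts gives $R_r=-\int_\Pi F_r(g(u))\cdot\nabla_xf\,dtdx\to0$ without ever estimating $\int_\Pi H_r'|\nabla_xg(u)|f\,dtdx$. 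This ``rewrite the cross term as the divergence of a uniformly small flux'' step is the idea your argument is missing.
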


\begin{proof}
Since $g(k)\in S$, then $$H(u-k)=H(g(u)-g(k))=\lim_{r\to\infty} H_r(g(u)-g(k))$$
whenever $g(u)\not=g(k)$ (notice that $H_r(0)=1/2$ for all $r\in\N$).
Let $\eta_r(u)=\int_k^u H_r(g(s)-g(k))ds$. Obviously, $\eta_r(u)\to (u-k)^+$ as $r\to\infty$ uniformly in $u$. Applying Lemma~\ref{lem1} with $p(v)=H_r(v-g(k))$, we obtain that for each $f=f(t,x)\in C_0^\infty(\Pi)$, $f\ge 0$
\begin{align}\label{l8}
\int_\Pi \{\eta_r(u)f_t+((\varphi(u)-\varphi(k))-\nabla_x g(u))\cdot\nabla_x (H_r(g(u)-g(k))f)\}dtdx=\nonumber\\
\int_\Pi \{\eta_r(u)f_t+(\varphi(u)-\nabla_x g(u))\cdot\nabla_x (H_r(g(u)-g(k))f)\}dtdx\ge 0,
\end{align}
where we take into account that the vector $\int_\Pi \nabla_x (H_r(g(u)-g(k))f)dtdx=0$.
Since $$\nabla_x (H_r(g(u)-g(k))f)=fH_r'(g(u)-g(k))\nabla_x g(u)+H_r(g(u)-g(k))\nabla_x f,$$ (\ref{l8}) implies that
\begin{align}\label{l9}
\int_\Pi \{\eta_r(u)f_t+H_r(g(u)-g(k))((\varphi(u)-\varphi(k))-\nabla_x g(u))\cdot\nabla_xf\}dtdx+\nonumber\\
\int_\Pi fH_r'(g(u)-g(k))(\varphi(u)-\varphi(k))\cdot\nabla_x g(u)dtdx-\int_\Pi fH_r'(g(u)-g(k))|\nabla_x g(u)|^2dtdx\ge 0.
\end{align}
Now, we are going to pass in (\ref{l9}) to the limit as $r\to\infty$. Since $\nabla_x g(u)=0$ a.e. on the set where $g(u)=g(k)$, it is clear that the first integral
\begin{align}\label{l10}
\int_\Pi \{\eta_r(u)f_t+H_r(g(u)-g(k))((\varphi(u)-\varphi(k))-\nabla_x g(u))\cdot\nabla_xf\}dtdx\mathop{\to}_{r\to\infty}
\nonumber\\ \int_\Pi H(u-k)[(u-k)f_t+(\varphi(u)-\varphi(k)-\nabla_x g(u))\cdot\nabla_x f]dtdx.
\end{align}
The second integral can be treated in the same way as in the proof of \cite[Lemma 1]{Car}. Let $g^{-1}_0(v)$, where $v\in g(\R)$, be a point in $g^{-1}(v)$ of minimal absolute value. Obviously, $u=g^{-1}_0(g(u))$ whenever $g(u)\in S$ while
$\nabla_x g(u(t,x))$ almost everywhere on the set of $(t,x)$ where $g(u)\notin S$. Therefore
\begin{align}\label{l11}
I_r=\int_\Pi fH_r'(g(u)-g(k))(\varphi(u)-\varphi(k))\cdot\nabla_x g(u)dtdx=\nonumber\\ \int_\Pi fH_r'(g(u)-g(k))(\varphi(g^{-1}_0(g(u)))-\varphi(k))\cdot\nabla_x g(u)dtdx=\int_\Pi \div_x F_r(g(u)) f dtdx,
\end{align}
where we denote
$$
F_r(v)=\int_{g(k)}^v H_r'(s-g(k))(\varphi(g^{-1}_0(s))-\varphi(k))ds=\frac{r}{2}\int_{g(k)}^{v'}(\varphi(g^{-1}_0(s))-\varphi(k))ds,
$$
where $v'=\max(g(k)-1/r,\min(g(k)+1/r,v))$. It is clear that
$$
|F_r(v)|\le \frac{r}{2}\int_{g(k)-1/r}^{g(k)+1/r}|\varphi(g^{-1}_0(s))-\varphi(k)|ds\mathop{\to}_{r\to\infty} 0
$$
since the vector function $\varphi(g^{-1}_0(s))$ is continuous at the point $g(k)\in S$ and $\varphi(g^{-1}_0(g(k)))=\varphi(k)$. By Lebesgue dominated convergence theorem we deduce from (\ref{l11}) that
\begin{equation}\label{l12}
I_r=-\int_\Pi F_r(g(u))\cdot\nabla_x f dtdx\to 0 \ \mbox{ as } r\to\infty.
\end{equation}
Taking into account (\ref{l10}), (\ref{l12}), we deduce from (\ref{l9}) in the limit as $r\to\infty$ the desired relation
(\ref{l7}).
\end{proof}

\begin{corollary}\label{cor1} Assume that the function $g(u)$ is strictly increasing and $u=u(t,x)$ is a weak sub-s. (weak super-s.) of (\ref{1}), (\ref{2}). Then $u$ is an e.sub-s. (e.super-s.) of this problem.
\end{corollary}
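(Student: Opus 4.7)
The plan is to reduce the result to a direct application of Lemma~\ref{lem2}. Observe first that when $g(u)$ is strictly increasing on $\R$, the map $g$ is injective, hence $g^{-1}(g(k))=\{k\}$ for every $k\in\R$. Consequently $g(k)\in S=S_g$ for all $k\in\R$, which is exactly the hypothesis needed to invoke Lemma~\ref{lem2} at an arbitrary level $k$.

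Suppose now that $u$ is a weak sub-s.\ of (\ref{1}), (\ref{2}). By definition (Remark~\ref{rem1}(ii)) this means that $\nabla_x g(u)\in L^2_{loc}(\Pi,\R^n)$, that (\ref{wsub}) holds in $\D'(\Pi)$, and that the initial condition (\ref{isub}) is satisfied. Since the initial requirement in the definition of e.sub-s.\ coincides with (\ref{isub}), it suffices to verify the entropy inequality (\ref{esub}) for every $k\in\R$. Fix any $k\in\R$ and any nonnegative $f\in C_0^\infty(\Pi)$. By the observation of the previous paragraph, Lemma~\ref{lem2} applies and yields
\[
\int_\Pi H(u-k)\{(u-k)f_t+[\varphi(u)-\varphi(k)-\nabla_x g(u)]\cdot\nabla_x f\}\,dtdx\ge 0,
\]
which is precisely the integral form (\ref{esubi}) of (\ref{esub}). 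Hence $u$ is an e.sub-s., proving the sub-solution case.

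For the super-solution case I would appeal to the symmetry recorded in Remark~\ref{rem1}(i). If $u$ is a weak super-s.\ of (\ref{1}), (\ref{2}), then $\tilde u=-u$ is a weak sub-s.\ of the problem (\ref{red}), whose flux is $\tilde\varphi(u)=-\varphi(-u)$ and whose diffusion is $\tilde g(u)=-g(-u)$. The function $\tilde g$ is again strictly increasing on $\R$ whenever $g$ is, so the sub-solution case already proved applies to $\tilde u$ and shows that $\tilde u$ is an e.sub-s.\ of (\ref{red}). Applying Remark~\ref{rem1}(i) once more, we conclude that $u$ is an e.super-s.\ of (\ref{1}), (\ref{2}).

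There is no serious obstacle: the whole content is that strict monotonicity of $g$ makes every real level $k$ admissible in Lemma~\ref{lem2}, and the initial condition is automatically inherited from the weak sub-/super-solution definition. The only minor point to verify is that $\tilde g(u)=-g(-u)$ inherits strict monotonicity, which is immediate.
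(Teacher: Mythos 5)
Your proposal is correct and follows essentially the same route as the paper: strict monotonicity of $g$ gives $g(k)\in S_g$ for every $k$, so Lemma~\ref{lem2} yields the entropy inequality (\ref{esub}) for all levels, and the super-solution case is reduced to the sub-solution case via the symmetry of Remark~\ref{rem1}(i) applied to problem (\ref{red}). Your explicit check that $-g(-u)$ is again strictly increasing is a small point the paper leaves implicit, but the argument is the same.
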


\begin{proof}
If $u$ is a weak sub-s. of (\ref{1}), (\ref{2}) then it satisfies the assumptions of Lemma~\ref{lem2}. Further, since $g(u)$ is strictly increasing then $g(k)\in S$ for every $k\in\R$. In view of (\ref{l7}) the function $u$ satisfies entropy relation (\ref{esub}). Hence, it is an e.sub-s. Now, suppose that $u$ is a weak super-s. of (\ref{1}), (\ref{2}). In view of relation (\ref{wsuper})
$$
(-u)_t+\div_x(-\varphi(u))-\Delta_x(-g(u))=-[u_t+\div_x \varphi(u)-\Delta_x g(u)]\le 0 \ \mbox{ in } \D'(\Pi),
$$
that is, the function $-u$ satisfies relation (\ref{wsub}) subject to equation (\ref{red}). Thus, the function $-u$ is a weak sub-s. of the problem (\ref{red}). As was already proved, $-u$ is an e.sub-s. of this problem. But this means that the function $u$ is an e.super-s. of original problem (\ref{1}), (\ref{2}), see Remark~\ref{rem1}(i).
\end{proof}

%

\section{Main results}

\begin{theorem}\label{th1}
Let $u_1=u_1(t,x)$, $u_2=u_2(t,x)$ be e.sub-s. of (\ref{1}), (\ref{2}). Then $u=\max(u_1,u_2)$ is an e.sub-s. of this problem as well.
\end{theorem}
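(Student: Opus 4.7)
I will verify the three requirements of Definition~\ref{def1} for $u=\max(u_1,u_2)$ separately. The initial condition (\ref{isub}) is immediate from the pointwise bound $(u-u_0)^+\le(u_1-u_0)^++(u_2-u_0)^+$ together with (\ref{isub}) for $u_1,u_2$. The regularity $\nabla_xg(u)\in L^2_{loc}(\Pi,\R^n)$ follows from $g(u)=\max(g(u_1),g(u_2))$ (since $g$ is nondecreasing) combined with the chain rule $\nabla_x\max(v_1,v_2)=H(v_1-v_2)\nabla_xv_1+H(v_2-v_1)\nabla_xv_2$ a.e.\ for $W^{1,2}_{loc}$ functions, which gives $|\nabla_xg(u)|\le\max(|\nabla_xg(u_1)|,|\nabla_xg(u_2)|)\in L^2_{loc}$.

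For the entropy inequality I use the equivalent formulation (\ref{esub1}) of Remark~\ref{rem1}(iii): fix $k\in\R$ and set $w_i=\max(u_i,k)$, so that $\max(u,k)=\max(w_1,w_2)$. Each $w_i$ is a weak sub-s.\ of (\ref{1}) by Remark~\ref{rem1}(iii) applied to $u_i$. Using $\max(w_1,w_2)=w_1+(w_2-w_1)^+$, $\varphi(\max(w_1,w_2))=\varphi(w_1)+H(w_2-w_1)(\varphi(w_2)-\varphi(w_1))$, and $g(\max(w_1,w_2))=g(w_1)+(g(w_2)-g(w_1))^+$, the desired weak sub-s.\ inequality for $\max(w_1,w_2)$ reduces---by adding to the weak sub-s.\ inequality (\ref{wsub}) for $w_1$---to the \emph{Kato-type inequality}
\[
\int_\Pi\{(w_2-w_1)^+f_t+H(w_2-w_1)(\varphi(w_2)-\varphi(w_1))\cdot\nabla_xf+(g(w_2)-g(w_1))^+\Delta_xf\}dtdx\ge 0
\]
for every nonnegative $f\in C_0^\infty(\Pi)$.

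The Kato inequality is established by Kruzhkov--Carrillo doubling of variables, exploiting the full entropy structure of the underlying $u_1,u_2$ (not merely the weak sub-s.\ structure of $w_i$). Treat $u_1$ as depending on $(t,x)$ and $u_2$ on auxiliary variables $(s,y)$; apply the entropy integral inequality of Proposition~\ref{pro1}, together with Lemma~\ref{lem2}, to $u_1$ with the constant level replaced by $\max(u_2(s,y),k)$, tested against $\Phi(t,x,s,y)=f((t+s)/2,(x+y)/2)\rho_\delta(t-s)\rho_\delta(x-y)$ with $\rho_\delta$ a standard nonnegative mollifier. Symmetrically, apply the analogous entropy inequality to $u_2$ at the level $\max(u_1(t,x),k)$. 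Lemma~\ref{lem2} is legitimate for a.e.\ value of the ``level'' parameter, since the complement $S^c$ of singleton values of $g^{-1}$ is at most countable. Summing the two inequalities and passing to the diagonal limit $\delta\to 0+$ produces the Kato inequality.

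The main technical obstacle is the passage to the diagonal in the degenerate-diffusion term. After integration by parts, second derivatives of the mollifier produce cross-terms involving $\nabla_xg(u_1(t,x))\cdot\nabla_yg(u_2(s,y))$, which via the identity
\[
2\nabla_xg(u_1)\cdot\nabla_yg(u_2)=|\nabla_xg(u_1)|^2+|\nabla_yg(u_2)|^2-|\nabla_xg(u_1)-\nabla_yg(u_2)|^2
\]
split into diagonal squares (which balance against the nonnegative $H_r'$-defect furnished by Lemma~\ref{lem2}) and a nonnegative squared-difference contribution that lies on the correct side of the inequality and may be discarded. This cancellation is the parabolic analogue of Carrillo's uniqueness argument for e.s., and constitutes the heart of the proof.
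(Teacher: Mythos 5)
Your overall machinery --- doubling of variables, the defect term supplied by Lemma~\ref{lem2}, and the cancellation of the cross terms $\nabla_x g(u_1)\cdot\nabla_y g(u_2)$ against the diagonal squares via $2ab=a^2+b^2-(a-b)^2$ --- is exactly the engine of the paper's proof, and your treatment of the regularity and of the initial condition is the paper's. But the reduction step is a genuine gap: the ``Kato-type inequality''
\[
((w_2-w_1)^+)_t+\div_x\bigl[H(w_2-w_1)(\varphi(w_2)-\varphi(w_1))\bigr]-\Delta_x\bigl((g(w_2)-g(w_1))^+\bigr)\le 0 \ \mbox{ in } \D'(\Pi)
\]
is \emph{false} for two entropy sub-solutions; it is the comparison inequality between a sub-solution $w_2$ and a \emph{super}-solution $w_1$. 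Concretely, take $\varphi\equiv 0$, $g(u)=u$ (the heat equation), $u_0\equiv 1$, $u_1=\max(-t,-1)$, $u_2\equiv 1$: both are e.sub-s.\ of (\ref{1}), (\ref{2}) (the entropy inequalities are checked directly, since $t\mapsto(u_1-k)^+$ is nonincreasing), and for $k\le-1$ and $f\ge0$ supported in $\{t<1\}$ your inequality reads $\int_\Pi(1+t)(f_t+\Delta_xf)\,dtdx=-\int_\Pi f\,dtdx<0$. The identity ``(inequality for $\max(w_1,w_2)$) $=$ (inequality for $w_1$) $+$ (Kato)'' is a correct decomposition of integrands, but the two summands cannot be proved nonnegative separately: in the example the strict positivity of the $w_1$-term is precisely what compensates the failure of the Kato term. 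Nor can the doubling deliver your Kato inequality: its $(s,y)$-half would have to be an entropy inequality for $(w_2(t,x)-w_1(s,y))^+$ in the variables $(s,y)$, i.e.\ the super-solution relation (\ref{esuper}) for $w_1$, which you do not have.

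The repair is to not split: run the doubling entirely in the $\max$-form (\ref{esub1}) of Remark~\ref{rem1}(iii), using for $u_1$ at the level $u_2(s,y)$ the inequality for $\max(u_1(t,x),u_2(s,y))$ and symmetrically for $u_2$ at the level $u_1(t,x)$. Both halves then carry the \emph{same} function $\max(u_1(t,x),u_2(s,y))$, the operators $(\partial_t+\partial_s)$ and $(\nabla_x+\nabla_y)$ close up on the diagonal, and one obtains directly that $\max(u_1,u_2)$ is a weak sub-solution; substituting $\max(u_i,k)$ for $u_i$ only at the very end yields (\ref{esub1}). This is what the paper does. A secondary flaw: your justification that Lemma~\ref{lem2} applies for ``a.e.\ value of the level'' is not correct --- the set of $(s,y)$ with $g(u_2(s,y))\notin S_g$ is the preimage of the (countably many) flat values of $g$ and can have positive measure. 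What saves the argument is that $\nabla_y g(u_2)=0$ a.e.\ on that set, so both the missing defect and the corresponding cross term vanish there; this is exactly the role of the sets $D_1$, $D_2$ in the paper's inequalities (\ref{4})--(\ref{5b}).
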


\begin{proof}
By the definition $\nabla_x g(u_1),\nabla_x g(u_2)\in L^2_{loc}(\Pi)$. Since $g(u)=\max(g(u_1),g(u_2))=(g(u_1)-g(u_2))^++g(u_2)$, then
\begin{align*}
\nabla_x g(u)=H(g(u_1)-g(u_2))(\nabla_x g(u_1)-\nabla_x g(u_2))+\nabla_x g(u_2)=\\
H(u_1-u_2)(\nabla_x g(u_1)-\nabla_x g(u_2))+\nabla_x g(u_2)\in L^2_{loc}(\Pi),
\end{align*}
where we take into account the fact that
$\nabla_x g(u_1)-\nabla_x g(u_2)=\nabla_x(g(u_1)-g(u_2))=0$ almost everywhere on the set $\{(t,x) | g(u_1)=g(u_2)\}$.
Further, $$(u-u_0)^+=\max((u_1-u_0)^+,(u_2-u_0)^+)\le (u_1-u_0)^++(u_2-u_0)^+.$$
Therefore, it follows from the initial relations (\ref{isub}) for e.sub-s. $u_1,u_2$ that
$$
\esslim_{t\to 0} (u(t,x)-u_0(x))^+=0 \ \mbox{ in } L^1_{loc}(\R^n).
$$
It only remains to verify the entropy relation (\ref{esub1}). For that we apply the doubling variables technique.
Namely, we consider the function $u_2$ as a function of new variables $(s,y)\in\Pi$. Taking in (\ref{esub1})
$k=u_2(s,y)$, we obtain that
$$
(\max(u_1,u_2))_t+\div_x\varphi(\max(u_1,u_2))-\Delta_x g(\max(u_1,u_2))\le 0  \ \mbox{ in } \D'(\Pi).
$$
Therefore, for each nonnegative test function $f=f(t,x;s,y)\in C_0^\infty(\Pi\times\Pi)$ and all $(s,y)\in\Pi$
\begin{equation}\label{3}
\int_{\Pi}\{\max(u_1,u_2))f_t+[\varphi(\max(u_1,u_2))-H(u_1-u_2)\nabla_x g(u)]\cdot\nabla_x f\}dtdx\ge 0.
\end{equation}
Moreover, if $(s,y)\in D_2\doteq\{ \ (s,y)\in\Pi \ | \ g(u_2(s,y))\in S_g \ \}$, then by Lemma~\ref{lem2}
\begin{align}\label{3a}
\int_{\Pi}\{\max(u_1,u_2))f_t+[\varphi(\max(u_1,u_2))-H(u_1-u_2)\nabla_x g(u_1)]\cdot\nabla_x f\}dtdx\ge \nonumber\\
\limsup_{r\to\infty}
\int_\Pi H_r'(g(u_1)-g(u_2))|\nabla_x g(u_1)|^2f dtdx.
\end{align}
It follows from (\ref{3}), (\ref{3a}), after integration with respect to $(s,y)$, that
\begin{align}\label{4}
\int_{\Pi\times\Pi}\{\max(u_1,u_2))f_t+[\varphi(\max(u_1,u_2))-H(u_1-u_2)\nabla_x g(u_1)]\cdot\nabla_x f\}dtdxdsdy\ge \nonumber\\
\limsup_{r\to\infty}
\int_{\Pi\times D_2} H_r'(g(u_1)-g(u_2))|\nabla_x g(u_1)|^2f dtdxdsdy=\nonumber\\ \limsup_{r\to\infty}
\int_{D_1\times D_2} H_r'(g(u_1)-g(u_2))|\nabla_x g(u_1)|^2f dtdxdsdy,
\end{align}
where we denote $D_1=\{ \ (t,x)\in\Pi \ | \ g(u_1(t,x))\in S_g \ \}$. In (\ref{4}) we take into account that $\nabla_x g(u_1)=0$ a.e. on the complement of the set $D_1$.

Similarly, changing places of variables $(t,x)$ and $(s,y)$ and taking into account that $u_2=u_2(s,y)$ is an e.sub-s. of the equation $u_s+\div_y\varphi(u))-\Delta_y(g(u))=0$, we obtain the inequality
\begin{align}\label{4a}
\int_{\Pi\times\Pi}\{\max(u_1,u_2))f_s+[\varphi(\max(u_1,u_2))-H(u_2-u_1)\nabla_y g(u_2)]\cdot\nabla_y f\}dtdxdsdy\ge \nonumber\\
\limsup_{r\to\infty}
\int_{D_1\times D_2} H_r'(g(u_2)-g(u_1))|\nabla_y g(u_2)|^2f dtdxdsdy.
\end{align}
Since, evidently, for each $r\in\N$
\begin{align*}
0=\int_{\Pi\times\Pi}\nabla_x g(u_1)\cdot\nabla_y (H_r(g(u_1)-g(u_2))f)dtdxdsdy= \\
-\int_{\Pi\times\Pi}H_r'(g(u_1)-g(u_2))\nabla_x g(u_1)\cdot\nabla_y g(u_2)fdtdxdsdy+\\
\int_{\Pi\times\Pi}H_r(g(u_1)-g(u_2))\nabla_x g(u_1)\cdot\nabla_y fdtdxdsdy, \\
0=\int_{\Pi\times\Pi}\nabla_y g(u_2)\cdot\nabla_x (H_r(g(u_2)-g(u_1))f)dtdxdsdy= \\
-\int_{\Pi\times\Pi}H_r'(g(u_2)-g(u_1))\nabla_x g(u_1)\cdot\nabla_y g(u_2)fdtdxdsdy+\\
\int_{\Pi\times\Pi}H_r(g(u_2)-g(u_1))\nabla_y g(u_2)\cdot\nabla_x fdtdxdsdy,
\end{align*}
we arrive at the following limit relations
\begin{align}
\label{5a}
-\int_{\Pi\times\Pi}H(u_1-u_2)\nabla_x g(u_1)\cdot\nabla_y fdtdxdsdy=
-\int_{\Pi\times\Pi}H(g(u_1)-g(u_2))\nabla_x g(u_1)\cdot\nabla_y fdtdxdsdy=\nonumber\\ -\lim_{r\to\infty}
\int_{\Pi\times\Pi}H_r'(g(u_1)-g(u_2))\nabla_x g(u_1)\cdot\nabla_y g(u_2)fdtdxdsdy=\nonumber\\
-\lim_{r\to\infty}
\int_{D_1\times D_2}H_r'(g(u_1)-g(u_2))\nabla_x g(u_1)\cdot\nabla_y g(u_2)fdtdxdsdy; \\
\label{5b}
-\int_{\Pi\times\Pi}H(u_2-u_1)\nabla_y g(u_2)\cdot\nabla_x fdtdxdsdy=-\int_{\Pi\times\Pi}H(g(u_2)-g(u_1))\nabla_y g(u_2)\cdot\nabla_x fdtdxdsdy=\nonumber\\
-\lim_{r\to\infty}\int_{D_1\times D_2}H_r'(g(u_2)-g(u_1))\nabla_x g(u_1)\cdot\nabla_y g(u_2)fdtdxdsdy,
\end{align}
where we use that $H_r'(s)\mathop{\to}\limits_{r\to\infty} H(s)$ for $s\not=0$ while $\nabla_x g(u_1)=0$ a.e. on the set, where $g(u_1)-g(u_2)=0$.

Putting relations (\ref{4}), (\ref{4a}), (\ref{5a}), (\ref{5b}) together, we obtain that
\begin{align}\label{6}
\int_{\Pi\times\Pi}\{\max(u_1,u_2))(f_t+f_s)+\varphi(\max(u_1,u_2))\cdot(\nabla_x+\nabla_y) f- \nonumber\\ (H(u_1-u_2)\nabla_x g(u_1)+H(u_2-u_1)\nabla_y g(u_2))\cdot(\nabla_x+\nabla_y) f\}dtdxdsdy\ge \nonumber\\
\limsup_{r\to\infty}
\int_{D_1\times D_2} H_r'(g(u_1)-g(u_2))|\nabla_x g(u_1)-\nabla_y g(y_2)|^2f dtdxdsdy\ge 0,
\end{align}
where we use the fact that $H_r'(s)$ is an even function.
Since
$$
H(u_1-u_2)\nabla_x g(u_1)+H(u_2-u_1)\nabla_y g(u_2)=(\nabla_x+\nabla_y)g(\max(u_1,u_2)),
$$
we can rewrite (\ref{6}) in the form
\begin{align}\label{7}
\int_{\Pi\times\Pi}\{\max(u_1,u_2)(f_t+f_s)+\varphi(\max(u_1,u_2))\cdot(\nabla_x+\nabla_y) f+ \nonumber\\ g(\max(u_1,u_2))(\nabla_x+\nabla_y)\cdot(\nabla_x+\nabla_y) f\}dtdxdsdy\ge 0.
\end{align}
Let $\delta_r(t,x)=\omega_r(t)\prod_{i=1}^n\omega_r(x_i)$, where $t\in\R$, $x=(x_1,\ldots,x_n)\in\R^n$, and the sequence
$\omega_r(s)$, $r\in\N$, was introduced above in the proof of Proposition~\ref{pro1}. We apply (\ref{7}) to the test function $f=h(t,x)\delta_r(t-s,x-y)$, where $h=h(t,x)\in C_0^\infty(\Pi)$, $h\ge 0$. It is clear that $f\in C_0^\infty(\Pi\times\Pi)$ for sufficiently large $r$, $f\ge 0$. Since
$$
(\partial_t+\partial_s)\delta_r(t-s,x-y)=0, \quad (\nabla_x+\nabla_y)\delta_r(t-s,x-y)=0,
$$
it follows from (\ref{7}) that
\begin{equation}\label{8}
\int_{\Pi\times\Pi}\{\max(u_1,u_2)h_t+\varphi(\max(u_1,u_2))\cdot\nabla_x h+ g(\max(u_1,u_2))\Delta_x h\}\delta_r(t-s,x-y)dtdxdsdy\ge 0.
\end{equation}
Observe that
\begin{align*}
|\max(u_1(t,x),u_2(s,y))-\max(u_1(t,x),u_2(t,x))|\le |u_2(s,y)-u_2(t,x)|, \\ |\varphi(\max(u_1(t,x),u_2(s,y)))-\varphi(\max(u_1(t,x),u_2(s,y)))|\le\mu_\varphi(|u_2(s,y)-u_2(t,x)|), \\
|g(\max(u_1(t,x),u_2(s,y)))-g(\max(u_1(t,x),u_2(s,y)))|\le\mu_g(|u_2(s,y)-u_2(t,x)|),
\end{align*}
where
\begin{align*}
\mu_\varphi(\sigma)=\max\{ \ |\varphi(u)-\varphi(v)| \ | \ u,v\in [-M,M], |u-v|\le\sigma \ \}, \\
\mu_g(\sigma)=\max\{ \ |g(u)-g(v)| \ | \ u,v\in [-M,M], |u-v|\le\sigma \
\end{align*}
are continuity modules of the vector function $\varphi(u)$ and the function $g(u)$, respectively, on the segment $[-M,M]$ with $M=\|u_2\|_\infty$.
It follows from these estimates that
\begin{align}\label{9}
\int_\Pi\{\max(u_1,u_2)h_t+\varphi(\max(u_1,u_2))\cdot\nabla_x h+ g(\max(u_1,u_2))\Delta_x h\}\delta_r(t-s,x-y)dsdy\mathop{\to}_{r\to\infty} \nonumber\\
\max(u_1(t,x),u_2(t,x))h_t(t,x)+\varphi(\max(u_1(t,x),u_2(t,x)))\cdot\nabla_x h(t,x)+\nonumber\\ g(\max(u_1(t,x),u_2(t,x)))\Delta_x h(t,x)
\end{align}
for each $(t,x)$ from the set of full measure of Lebesgue points of the function $u_2$. By Lebesgue dominated convergence theorem, we derive from (\ref{9}) the limit relation
\begin{align*}
\int_{\Pi\times\Pi}\{\max(u_1,u_2)h_t+\varphi(\max(u_1,u_2))\cdot\nabla_x h+ g(\max(u_1,u_2))\Delta_x h\}\delta_r(t-s,x-y)dtdxdsdy \nonumber\\ \mathop{\to}_{r\to\infty}
\int_\Pi\{\max(u_1,u_2)h_t+\varphi(\max(u_1,u_2))\cdot\nabla_x h+ g(\max(u_1,u_2))\Delta_x h\}dtdx
\end{align*}
(in the left integral $u_2=u_2(s,y)$ while in the right integral $u_2=u_2(t,x)$).
In view of (\ref{8}), this relation implies that
$$
\int_\Pi\{\max(u_1,u_2)h_t+\varphi(\max(u_1,u_2))\cdot\nabla_x h+ g(\max(u_1,u_2))\Delta_x h\}dtdx\ge 0
$$
for all nonnegative test function $h\in C_0^\infty(\Pi)$, that is,
\begin{equation}\label{10}
(\max(u_1,u_2))_t+\div_x\varphi(\max(u_1,u_2))-\Delta_x g(\max(u_1,u_2))\le 0 \ \mbox{ in } \D'(\Pi).
\end{equation}
As directly follows from Remark~\ref{rem1}(iii), for every $k\in\R$ the functions $\max(u_1,k)$, $\max(u_2,k)$ are e.sub-s. of (\ref{1}), (\ref{2}) with initial data
$\max(u_0(x),k)$. Placing these e.sub-s. in (\ref{10}) instead of $u_1,u_2$, we obtain that for all $k\in\R$
$$
(\max(u_1,u_2,k))_t+\div_x\varphi(\max(u_1,u_2,k))-\Delta_x g(\max(u_1,u_2,k))\le 0 \ \mbox{ in } \D'(\Pi).
$$
This means that $u=\max(u_1,u_2)$ satisfies entropy relation (\ref{esub1}). The proof is complete.
\end{proof}
The following extension of Theorem~\ref{th1} is proved by induction in the number of functions $m$.
\begin{corollary}\label{cor3}
If $u_i=u_i(t,x)$, $i=1,\dots,m$, are e.sub-s. of (\ref{1}), (\ref{2}). Then $u=\max\limits_{i=1,\ldots,m} u_i(t,x)$ is an e.sub-s. of (\ref{1}), (\ref{2}) as well.
\end{corollary}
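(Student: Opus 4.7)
The plan is a straightforward induction on the number of functions $m$. The base case $m=1$ is trivial, and the case $m=2$ is exactly the content of Theorem~\ref{th1}. For the inductive step, I would assume the statement for any collection of $m-1$ entropy sub-solutions of (\ref{1}), (\ref{2}), and given e.sub-s.\ $u_1,\ldots,u_m$, set $v=\max_{1\le i\le m-1}u_i$. By the induction hypothesis, $v$ is itself an e.sub-s.\ of (\ref{1}), (\ref{2}) with the initial datum $u_0$. Since
\[
u=\max_{1\le i\le m}u_i=\max(v,u_m)
\]
is the maximum of two e.sub-s.\ of the same problem, one more application of Theorem~\ref{th1} to the pair $(v,u_m)$ yields that $u$ is an e.sub-s.\ of (\ref{1}), (\ref{2}), closing the induction.

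No genuine obstacle arises here: all the hard analytic work — the doubling of variables, the handling of $\nabla_x g(u_i)$ on the coincidence sets $\{g(u_1)=g(u_2)\}$, the passage to the limit using Lemma~\ref{lem2}, and the verification of the initial trace (\ref{isub}) — has already been absorbed into Theorem~\ref{th1}. The corollary only records the fact that the class of e.sub-s.\ with a fixed initial datum is closed under pairwise maxima, hence under finite maxima by iteration. The one bookkeeping point worth mentioning is that the initial datum is preserved at each stage of the induction, which follows because Theorem~\ref{th1} produces an e.sub-s.\ whose initial datum is the common initial datum $u_0$ of its two inputs.
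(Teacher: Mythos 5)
Your proof is correct and is exactly the argument the paper intends: the paper states only that the corollary ``is proved by induction in the number of functions $m$,'' and your induction with base case Theorem~\ref{th1} and inductive step $\max_{1\le i\le m}u_i=\max\bigl(\max_{1\le i\le m-1}u_i,\,u_m\bigr)$ is that proof. The remark about the initial datum being preserved at each stage is the right bookkeeping point and is all that needs checking.
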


In view of Remark~\ref{rem1}(i), minimum of a finite family of e.super-s. of (\ref{1}), (\ref{2}) is an e.super-s. of the same problem.

Now, we are ready to establish existence of the largest e.sub-s. and the smallest e.super-s. of our problem.

\subsection{Proof of Theorem~\ref{th2}}
We denote by $Sub$ the set of e.sub-s. of the problem (\ref{1}), (\ref{2}). This set is not empty since it includes an existing e.s. By Proposition~\ref{pro2} $u\le b=\esssup u_0(x)$ for every $u\in Sub$ and therefore
$$
I(u)\doteq\int_\Pi u(t,x)e^{-t-|x|}dtdx\le b\int_\Pi e^{-t-|x|}dtdx=\const.
$$
This implies that there exists a finite value $I=\sup\limits_{u\in Sub} I(u)$. We choose the sequence $u_r$, $r\in\N$,
such that $I(u_r)\ge I-1/r$. By Corollary~\ref{cor3} the functions $v_r=\max\limits_{i=1,r} u_i(t,x)$ are e.sub-s. of the problem (\ref{1}), (\ref{2}). Since $v_r\ge u_r$, then $1-1/r\le I(v_r)\le I$. Obviously, the sequence $v_r$ is increasing and bounded, $a\doteq\essinf u_1\le v_r(t,x)\le b$ for a.e. $(t,x)\in\Pi$. Therefore, the sequence $v_r$ converges as $r\to\infty$ to some function $u_+(t,x)$ a.e. on $\Pi$ and in $L^1_{loc}(\Pi)$ as well. It is clear that $u_+(t,x)\in L^\infty(\Pi)$, $a\le u_+\le b$ a.e. on $\Pi$. Let us show that $u_+$ is an e.sub-s. of (\ref{1}), (\ref{2}).
Observe that $\nabla_x g(v_r)\in L^2_{loc}(\Pi,\R^n)$ and in view of Corollary~\ref{cor0} (with $M=\max(|a|,|b|)$), the sequence $\nabla_x g(v_r)$ is bounded in $L^2_{loc}(\Pi,\R^n)$. Passing to a subsequence if necessary, we may assume that this sequence converges as $r\to\infty$ to some vector $p=p(t,x)$ weakly in $L^2_{loc}(\Pi,\R^n)$. From the identity
$$
\int_\Pi g(v_r)\nabla_x f dtdx=-\int_\Pi f\nabla_x g(v_r) dtdx, \quad f=f(t,x)\in C_0^\infty(\Pi)
$$
it follows, in the limit as $r\to\infty$, that
$$
\int_\Pi g(u_+)\nabla_x f dtdx=-\int_\Pi fp dtdx, \quad \forall f=f(t,x)\in C_0^\infty(\Pi).
$$
This means that $\nabla_x g(u_+)=p\in L^2_{loc}(\Pi,\R^n)$ in $\D'(\Pi)$. Further, e.sub-s. $v_r$ satisfy (\ref{eint})
$$
\int_\Pi [(v_r-k)^+f_t+H(v_r-k)(\varphi(v_r)-\varphi(k))\cdot\nabla_x f+(g(v_r)-g(k))^+\Delta_x f]dtdx+\int_{\R^n} (u_0(x)-k)^+f(0,x)dx\ge 0
$$
for each $k\in\R$ and any nonnegative test function $f=f(t,x)\in C_0^\infty(\bar\Pi)$. In the limit as $r\to\infty$ we obtain that the limit function $u_+$ satisfies (\ref{eint}) as well:
$$
\int_\Pi [(u_+-k)^+f_t+H(u_+-k)(\varphi(u_+)-\varphi(k))\cdot\nabla_x f+(g(u_+)-g(k))^+\Delta_x f]dtdx+\int_{\R^n} (u_0(x)-k)^+f(0,x)dx\ge 0.
$$
By Proposition~\ref{pro1} we see that $u_+$ is an e.sub-s. of (\ref{1}), (\ref{2}). Notice also that
$$
I(u_+)=\lim_{r\to\infty} I(v_r)=I.
$$
Now we demonstrate that $u_+$ is the largest e.sub-s. In fact, if $u\in Sub$ then $v=\max(u_+,u)\in Sub$ as well,
by Theorem~\ref{th1}.
Therefore,
$$
\int_\Pi (v(t,x)-u_+(t,x))e^{-t-|x|}dtdx=I(v)-I\le 0.
$$
Since $v\ge u_+$, this implies that $v=u_+$ a.e. on $\Pi$, which is equivalent to the relation $u\le u_+$ a.e. on $\Pi$.
Thus $u_+$ is larger than any e.sub-s. and therefore it is the largest e.sub-s. of the problem (\ref{1}), (\ref{2}).
Clearly such an e.sub-s. is unique.

As we already have proved, there exists $v_+=v_+(t,x)$ the largest e.sub-s. of problem (\ref{red}). Then by Remark~\ref{rem1}(i) the function
$u_-(t,x)=-v_+(t,x)$ is the smallest e.super-s. of the original problem.

Finally, let $u=u(t,x)$ be an e.s. of (\ref{1}), (\ref{2}). Since $u$ is an e.sub-s. and e.super-s. of this problem, we conclude that $u_-\le u\le u_+$ a.e. on $\Pi$. This completes the proof.

\subsection{The case of periodic initial data}
Now we assume that the initial function $u_0(x)$ is periodic. Without loss of generality, we may assume that the lattice of periods is the standard lattice $\Z^n$. Hence, $u_0(x+e)=u_0(x)$ a.e. on $\R^n$ for each $e\in\Z^n$.

\begin{theorem}\label{th3}
The largest e.sub-s. $u_+$ and the smallest e.super-s. $u_-$ of the problem (\ref{1}), (\ref{2}) are space-periodic and coincide: $u_+=u_-$.
\end{theorem}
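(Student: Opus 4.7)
The plan is to first show that $u_+$ and $u_-$ are $\Z^n$-periodic, and then to derive a Kato-type $L^1$-contraction for $(u_+-u_-)^+$ integrated over one period of $\Z^n$. Since $(u_0-u_0)^+\equiv 0$, this will force $u_+\le u_-$ a.e., which combined with $u_-\le u_+$ from Theorem~\ref{th2} gives $u_+=u_-$.

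For periodicity, I fix $e\in\Z^n$ and observe that translation invariance of (\ref{1}) together with $u_0(x+e)=u_0(x)$ imply that $u_+(t,x+e)$ is again an e.sub-s.\ of (\ref{1}), (\ref{2}). By the maximality of $u_+$ established in Theorem~\ref{th2}, $u_+(t,x+e)\le u_+(t,x)$ a.e.; replacing $e$ by $-e$ yields the reverse inequality, so $u_+$ is $\Z^n$-periodic in $x$. The symmetric argument applied via minimality of $u_-$ shows that $u_-$ is also $\Z^n$-periodic.

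For the comparison $u_+\le u_-$, I adapt the doubling-of-variables technique used in the proof of Theorem~\ref{th1}. Treating $u_+(t,x)$ as an e.sub-s.\ and $u_-(s,y)$ as an e.super-s., I apply (\ref{esub}) with $k=u_-(s,y)$ and (\ref{esuper}) with $k=u_+(t,x)$, invoking Lemma~\ref{lem2} separately on the sets $D_+=\{(t,x)\mid g(u_+(t,x))\in S_g\}$ and $D_-=\{(s,y)\mid g(u_-(s,y))\in S_g\}$, combined with the $H_r$-regularization of the Heaviside function exactly as in Theorem~\ref{th1}. Combining the two doubled inequalities (the resulting nonnegative squared-gradient dissipation on $D_+\times D_-$, obtained via the evenness of $H_r'$ as in Theorem~\ref{th1}, may be discarded), I obtain, for every nonnegative $f\in C_0^\infty(\Pi\times\Pi)$,
\begin{align*}
\iint_{\Pi\times\Pi}\bigl\{&(u_+-u_-)^+(f_t+f_s)+H(u_+-u_-)(\varphi(u_+)-\varphi(u_-))\cdot(\nabla_x+\nabla_y)f\\
&+(g(u_+)-g(u_-))^+(\Delta_x+\Delta_y)f\bigr\}\,dt\,dx\,ds\,dy\ge 0.
\end{align*}
Specializing $f(t,x,s,y)=h(t,x)\delta_r(t-s,x-y)$ and passing $r\to\infty$ exactly as in the proof of Theorem~\ref{th1} contracts this to
\[
\int_\Pi\{(u_+-u_-)^+h_t+H(u_+-u_-)(\varphi(u_+)-\varphi(u_-))\cdot\nabla_x h+(g(u_+)-g(u_-))^+\Delta_x h\}\,dt\,dx\ge 0
\]
for all nonnegative $h\in C_0^\infty(\Pi)$, and the argument of Proposition~\ref{pro1} upgrades this to $h\in C_0^\infty(\bar\Pi)$ with an additional initial term $\int_{\R^n}(u_0(x)-u_0(x))^+h(0,x)\,dx=0$.

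To conclude, I take $h(t,x)=\alpha(t)\psi_N(x)$ with $\alpha\in C_0^\infty([0,+\infty))$, $\alpha\ge 0$, and $\psi_N$ a smooth nonnegative cutoff equal to $1$ on $[-N,N]^n$ and vanishing outside $[-N-1,N+1]^n$. Dividing by $(2N)^n$, the divergence and Laplacian terms are $O(N^{n-1})$ and vanish as $N\to\infty$, while the $h_t$-term converges, by $\Z^n$-periodicity of $u_\pm$, to $\alpha'(t)\int_{[0,1]^n}(u_+-u_-)^+(t,x)\,dx$. Setting $J(t)=\int_{[0,1]^n}(u_+(t,x)-u_-(t,x))^+\,dx$, I arrive at $\int_0^\infty\alpha'(t)J(t)\,dt\ge 0$ for all nonnegative $\alpha\in C_0^\infty([0,+\infty))$, which shows that $J$ has a nonincreasing representative on $[0,+\infty)$. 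Since $(u_+-u_-)^+\le(u_+-u_0)^++(u_0-u_-)^+$, the initial conditions (\ref{isub}), (\ref{isuper}) give $\esslim_{t\to 0+}J(t)=0$, forcing $J\equiv 0$ and hence $u_+\le u_-$ a.e.\ on $\Pi$. The main obstacle will be the Carrillo-style $H_r$ bookkeeping for the doubled sub/super inequality: one must verify that the cross terms $\int H_r'(g(u_+)-g(u_-))\,\nabla_x g(u_+)\cdot\nabla_y g(u_-)\,f$ combine with the squared-gradient contributions from Lemma~\ref{lem2} applied to $u_+$ and $u_-$ to produce the nonnegative term $\int_{D_+\times D_-}H_r'(g(u_+)-g(u_-))|\nabla_x g(u_+)-\nabla_y g(u_-)|^2 f\,dt\,dx\,ds\,dy$, with signs analogous to (but not identical to) those of Theorem~\ref{th1}.
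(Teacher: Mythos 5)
Your periodicity argument and your final period-cell localization are both fine and essentially match the paper. The problem is the central step: the doubled Kato inequality for the pair (e.sub-s.\ $u_+$, e.super-s.\ $u_-$) is exactly the part you do not prove --- you yourself defer it as ``the main obstacle'' --- and it is substantially harder than Theorem~\ref{th1}. In Theorem~\ref{th1} both functions are sub-solutions and the quantity estimated is $\max(u_1,u_2)$; the whole architecture of the paper (extremal sub-/super-solutions, Theorems~\ref{th1} and~\ref{th2}) is designed precisely to avoid having to establish the sub-/super-solution Carrillo doubling for merely continuous $\varphi$ and $g$. Concretely, for $(s,y)$ with $g(u_-(s,y))\notin S_g$, Lemma~\ref{lem2} is not applicable to $u_+$ with $k=u_-(s,y)$, and one must check that the basic inequality (\ref{esub}) alone, rewritten with $-H(u_+-k)\nabla_x g(u_+)\cdot\nabla_x f$ in place of $(g(u_+)-g(k))^+\Delta_x f$, combines with the cross terms and with the dissipation coming from the super-solution side so that the completed square $|\nabla_x g(u_+)-\nabla_y g(u_-)|^2$ appears with a favorable sign on all of $\Pi\times\Pi$ and not only on $D_+\times D_-$ (together with the interchange of $\limsup_r$ and the $(s,y)$-integration). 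Announcing that this ``must be verified'' leaves the theorem unproved.

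The key observation you miss is that none of this machinery is needed. Theorem~\ref{th2} already gives $u_-\le u_+$ a.e., so $(u_+-u_-)^+=u_+-u_-$ and no entropy (Kato) inequality is required: by Remark~\ref{rem1}(ii), $u_+$ satisfies (\ref{wsub}) and $u_-$ satisfies (\ref{wsuper}), and subtracting these two \emph{weak} inequalities gives
\begin{equation*}
(u_+-u_-)_t+\div_x(\varphi(u_+)-\varphi(u_-))-\Delta_x\bigl(g(u_+)-g(u_-)\bigr)\le 0 \ \mbox{ in } \D'(\Pi).
\end{equation*}
Testing with $\alpha(t)\beta(x/k)$ and letting $k\to\infty$ (your cutoff computation, applied to this single distributional inequality) yields $\frac{d}{dt}\int_P(u_+-u_-)\,dx\le 0$ over the period cell $P$, and the initial conditions (\ref{isub}), (\ref{isuper}) give $\int_P(u_+(t_0,\cdot)-u_-(t_0,\cdot))\,dx\to 0$ as $t_0\to 0$; since the integrand is nonnegative, $u_+=u_-$. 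This is the paper's proof: a few lines using nothing beyond the weak formulation and the ordering already established in Theorem~\ref{th2}.
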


\begin{proof}
Let $e\in\Z^m$. In view of periodicity of the initial function it is obvious that $u(t,x+e)$ is an e.sub-s. of (\ref{1}), (\ref{2}) if and only if $u(t,x)$ is an e.sub-s. of the same problem. Therefore, $u_+(t,x+e)$ is the largest e.sub-s.
of (\ref{1}), (\ref{2}) together with $u_+$. By the uniqueness $u_+(t,x+e)=u_+(t,x)$ a.e. on $\Pi$ for all $e\in\Z^n$, that is $u_+$ is a space periodic function. In the same way we prove space periodicity of the minimal e.super-s. $u_-$.
In view of (\ref{wsub}), (\ref{wsuper}) we have
$$
(u_+)_t+\div_x\varphi(u_+)-\Delta_x g(u_+)\le 0, \
(u_-)_t+\div_x\varphi(u_-)-\Delta_x g(u_-)\ge 0 \ \mbox{ in } \D'(\Pi).
$$
Subtracting the second inequality from the first one, we obtain the relation
\begin{equation}\label{11}
(u_+-u_-)_t+\div_x(\varphi(u_+)-\varphi(u_-))-\Delta_x (g(u_+)-g(u_-))\le 0 \ \mbox{ in } \D'(\Pi).
\end{equation}
Let $\alpha(t)\in C_0^1(\R_+)$, $\beta(y)\in C_0^2(\R^n)$, $\alpha(t),\beta(y)\ge 0$, $\displaystyle\int_{\R^n}\beta(y)dy=1$. Applying (\ref{11}) to the test function $\alpha(t)\beta(x/k)$, with $k\in\N$, we arrive at the relation
\begin{align*}
\int_\Pi(u_+-u_-)\alpha'(t)\beta(x/k)dtdx+k^{-1}\int_\Pi(\varphi(u_+)-\varphi(u_-))\cdot\nabla_y\beta(x/k)\alpha(t)dtdx+\\
k^{-2}\int_\Pi(g(u_+)-g(u_-))\Delta_y\beta(x/k)\alpha(t)dtdx\ge 0.
\end{align*}
Multiplying this inequality by $k^{-n}$ and passing to the limit as $k\to\infty$, we obtain
\begin{equation}\label{12}
\int_{\R_+\times P}(u_+(t,x)-u_-(t,x))\alpha'(t)dtdx\ge 0,
\end{equation}
where $P=[0,1)^n$ is the periodicity cell. We use here the known property
$$
\lim_{k\to\infty}k^{-n} \int_\Pi\mu(t,x)\alpha(t)\beta(x/k)dtdx=\int_{\R_+\times P}\alpha(t)\mu(t,x)dtdx
$$
for an arbitrary $x$-periodic function $\mu(t,x)\in L^1_{loc}(\Pi)$. Identity (\ref{12}) means that
$$
\frac{d}{dt}\int_P(u_+(t,x)-u_-(t,x))dx\le 0 \ \mbox{ in } \D'(\R_+).
$$
This implies that for a.e. $t,t_0$, $t>t_0$
\begin{equation}\label{13}
\int_P(u_+(t,x)-u_-(t,x))dx\le \int_P(u_+(t_0,x)-u_-(t_0,x))dx.
\end{equation}
Taking into account the initial relations (\ref{isub}), (\ref{isuper}), we find that
$$
\int_P(u_+(t_0,x)-u_-(t_0,x))dx\le\int_P(u_+(t_0,x)-u_0(x))^+dx+\int_P(u_0(x)-u_-(t_0,x))^+dx\to 0
$$
as $t_0\to 0$ running over a set of full measure. Therefore (\ref{13}) implies in the limit as $t_0\to 0$ that
$$\int_P(u_+(t,x)-u_-(t,x))dx=0$$ for a.e. $t>0$. Since $u_+\ge u_-$, we conclude that $u_+=u_-$ a.e. on $\Pi$.
\end{proof}
Since any e.s. of (\ref{1}), (\ref{2}) is situated between $u_-$ and $u_+$, we deduce the following

\begin{corollary}\label{cor4}
An e.s. of (\ref{1}), (\ref{2}) is unique and coincides with $u_+$.
\end{corollary}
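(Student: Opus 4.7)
The proof should be essentially immediate given the machinery already developed. My plan is to combine the sandwich inequality from Theorem~\ref{th2} with the collapse $u_+=u_-$ established in Theorem~\ref{th3}, which together pin down any e.s. to a single function.

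More precisely, I would argue as follows. Let $u=u(t,x)$ be an arbitrary e.s. of (\ref{1}), (\ref{2}) with the periodic initial datum $u_0$. By Definition~\ref{def1}, $u$ is simultaneously an e.sub-s. and an e.super-s. of the problem. The last assertion of Theorem~\ref{th2} then gives $u_-(t,x)\le u(t,x)\le u_+(t,x)$ for a.e. $(t,x)\in\Pi$, where $u_+$ is the largest e.sub-s. and $u_-$ the smallest e.super-s. provided by that theorem. Since $u_0$ is periodic, Theorem~\ref{th3} applies and yields $u_+(t,x)=u_-(t,x)$ a.e. on $\Pi$. Squeezing, we conclude $u(t,x)=u_+(t,x)=u_-(t,x)$ a.e. on $\Pi$, which is exactly the desired uniqueness together with the identification of the e.s. as $u_+$.

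I do not anticipate any real obstacle here: Theorem~\ref{th3} has already done the essential work (the maximum/minimum principle, the doubling of variables inside Theorem~\ref{th1}, and the integration over the periodicity cell to show $u_+=u_-$). The only substantive input at this stage is the trivial observation that an e.s. lies in both $Sub$ and the analogous family of e.super-s., so that the extremality statements from Theorem~\ref{th2} pin it between $u_-$ and $u_+$. No additional estimates, no further use of Lemma~\ref{lem1}, Lemma~\ref{lem2}, or the doubling technique are required at this final step.
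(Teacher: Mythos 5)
Your argument is correct and matches the paper's own reasoning exactly: any e.s.\ is both an e.sub-s.\ and an e.super-s., hence is squeezed between $u_-$ and $u_+$ by Theorem~\ref{th2}, and Theorem~\ref{th3} collapses these to a single function for periodic data. No issues.
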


More generally we establish below the comparison principle formulated in Theorem~\ref{th4}.
\subsection{Proof of Theorem~\ref{th4}}

For definiteness suppose that the function $u_0(x)$ is periodic. The case of periodic $v_0$ is treated similarly. By Theorem~\ref{th3}  the  functions $u_+=u_-$ coincide with the unique e.s. of (\ref{1}), (\ref{2}). Since $u_+$ is the largest e.sub-s of this problem, $u\le u_+=u_-$. It is clear that
the function $\min(u_-,v)$ is an e.super-s. of (\ref{1}), (\ref{2}) with initial function $u_0$ and since $u_-$ is the smallest e.super-s. of this problem, we conclude that $u\le u_-\le \min(u_-,v)\le v$, as was to be proved.

\section{Conclusion}
We underline that for conservation laws (\ref{con}) Theorems~\ref{th2},~\ref{th4} were establishes in \cite{PaMax1,PaMax2,PaIzv}.
Moreover, it was demonstrated in \cite{PaMax2,PaIzv} that the functions $u_\pm$ are actually e.s. of (\ref{con}), (\ref{2}). The same result can be proved in the parabolic case as well. The comparison principle and the uniqueness of e.s. remain valid in the case when the initial function is periodic at least in $n-1$ independent directions, this can be proved by the same methods as for conservations laws, see \cite{PaMax2,PaIzv}.

\section*{Acknowledgements}
This work was supported by the Ministry of Science and Higher Education of the Russian  Federation (project no. 1.445.2016/1.4) and by the Russian Foundation for Basic Research (grant 18-01-00258-a.)

\end{document}